\documentclass[a4paper, 11pt]{article}

% GENERIC PREAMBLE

\usepackage{amsmath, amscd, amsthm, amssymb, mathrsfs, color, framed, calc}
\usepackage[applemac]{inputenc}
\usepackage[all]{xy}
\usepackage[T1]{fontenc}
\usepackage{textcomp}
\usepackage{geometry}
\usepackage{graphicx}

\usepackage{mathptmx}
%\usepackage[adobe-utopia]{mathdesign}
%\usepackage[bitstream-charter]{mathdesign}
%see psnfss2e.pdf for more font info

% NEW OPERATORS

\DeclareMathOperator{\SO}{SO}

\DeclareMathOperator{\U}{U}
\DeclareMathOperator{\SU}{SU}

\DeclareMathOperator{\Lie}{Lie}

%\DeclareMathOperator{\CM}{CM}

% NEW COMMANDS

\newcommand{\R}{\mathbb R}
\newcommand{\C}{\mathbb C}
\newcommand{\Z}{\mathbb Z}

\newcommand{\so}{\mathfrak{so}}

%RENEWED COMMANDS

\renewcommand{\P}{\mathbb P}
\renewcommand{\H}{\mathbb H}
\renewcommand{\O}{\mathcal O}
\renewcommand{\u}{\mathfrak{u}}

%THEOREM STYLES
\theoremstyle{plain}
	\newtheorem{theorem}{Theorem}
	\newtheorem{proposition}[theorem]{Proposition}
	\newtheorem{lemma}[theorem]{Lemma}
	
	\newtheorem{conjecture}[theorem]{Conjecture}
    \newtheorem{question}[theorem]{Question}
	
\theoremstyle{definition}
	\newtheorem{definition}[theorem]{Definition}

\theoremstyle{plain}
	\newtheorem*{theorem*}{Theorem}
	\newtheorem*{proposition*}{Proposition}
	\newtheorem*{lemma*}{Lemma}
	\newtheorem*{corollary*}{Corollary}
	\newtheorem*{conjecture*}{Conjecture}
\theoremstyle{definition}
	\newtheorem*{definition*}{Definition}
	\newtheorem*{remark*}{Remark}
	\newtheorem*{remarks*}{Remarks}
	
\makeatletter
\def\blfootnote{\xdef\@thefnmark{}\@footnotetext}
\makeatother

%\numberwithin{equation}{section}

\begin{document}

\title{The diversity of symplectic Calabi--Yau six-manifolds}
\author{Joel Fine and Dmitri Panov}
\date{ }

\maketitle

\begin{abstract}
Given an integer $b$ and a finitely presented group $G$ we produce a compact symplectic six-manifold with $c_1=0$, $b_2>b$, $b_3>b$ and $\pi_1 = G$. In the simply-connected case we can also arrange for $b_3=0$; in particular these examples are not diffeomorphic to K\"ahler manifolds with $c_1=0$. The construction begins with a certain orientable four-dimensional hyperbolic orbifold assembled from right-angled 120-cells. The twistor space of the hyperbolic orbifold is a symplectic Calabi--Yau orbifold; a crepant resolution of this last orbifold produces a smooth symplectic manifold with the required properties.
\end{abstract}

\section{Overview}

A symplectic Calabi--Yau manifold is a symplectic manifold with vanishing integral first Chern class. A folklore conjecture states that 4-dimensional symplectic Calabi--Yau manifolds are diffeomorphic to $K3$ surfaces or $T^2$-bundles over $T^2$. Evidence supporting this conjecture is given in \cite{B} and \cite{Li} where it is proven that a symplectic Calabi--Yau 4-manifold has the rational homology of either a $K3$-surface or a $T^2$-bundle over $T^2$. In particular, the fundamental groups of such manifolds are strongly constrained. The goal of this article is to show that in dimension $6$ the situation is drastically different:

\begin{theorem}\label{main}
Given an integer $b$ and a finitely presented group $G$ there is a compact symplectic Calabi--Yau 6-manifold with $b_2> b$, $b_3>b$ and whose fundamental group is isomorphic to $G$.
 \end{theorem}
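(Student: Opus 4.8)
The plan is to realize the arbitrary finitely presented group $G$ as the fundamental group of a closed hyperbolic four-orbifold built from right-angled $120$-cells, then transport that fundamental group through the twistor construction to a symplectic Calabi--Yau six-orbifold, and finally smooth it while retaining control of the topology.

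\textbf{Step 1: Encoding $G$ in a hyperbolic four-orbifold.} First I would use the right-angled $120$-cell to assemble a large family of closed, orientable four-dimensional hyperbolic orbifolds. The key point is that reflective right-angled polytopes can be glued and quotiented along mirrors in a combinatorial fashion, so one has enormous freedom in the resulting orbifold fundamental group. Concretely, given a finite presentation of $G$ with generators $g_1,\dots,g_n$ and relators $r_1,\dots,r_m$, I would build an orbifold $X$ whose underlying space and singular locus are arranged so that $\pi_1^{orb}(X) \cong G$ — for instance by taking a hyperbolic four-manifold with a suitable free $\Z^{*n}$-type quotient structure, or by coning off loops to kill the relators, using Selberg's lemma to pass to torsion-free finite covers where needed. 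I would also want $X$ to be spin and to have the orbifold singularities of a controlled type (e.g. isolated $\C^2/\pm 1$ points, or along surfaces), since this dictates what the twistor space and its resolution look like.

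\textbf{Step 2: The twistor space as a symplectic Calabi--Yau orbifold.} Next I would invoke the (presumably earlier-established) fact that the twistor space $Z$ of a self-dual Einstein four-orbifold of negative scalar curvature carries a natural symplectic form whose first Chern class vanishes — this is the Fine--Panov symplectic twistor construction. Since a hyperbolic four-orbifold is self-dual Einstein, its twistor space $Z(X)$ is a symplectic Calabi--Yau six-orbifold. Because the twistor fibration $Z(X)\to X$ has simply connected fibre $S^2$, one gets $\pi_1(Z(X))\cong \pi_1^{orb}(X)\cong G$. The Betti number growth $b_2, b_3 > b$ should come for free, or nearly so: by choosing the four-orbifold in Step 1 to have arbitrarily large volume / complexity (stacking more $120$-cells) one forces the Betti numbers of $X$, hence of $Z(X)$, to grow without bound.

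\textbf{Step 3: Crepant resolution.} Finally I would resolve the orbifold singularities of $Z(X)$ by a crepant (first Chern class preserving) symplectic resolution, producing a smooth compact symplectic six-manifold $M$ with $c_1(M)=0$. Resolving complex-dimension-$\le 2$ quotient singularities crepantly is classical (ADE resolutions of surface singularities, and their product/twistor analogues), and one checks that it can be done symplectically in this setting. A crepant resolution does not change $\pi_1$, so $\pi_1(M)\cong G$; and it only adds to $b_2$ and $b_3$ via the exceptional divisors, so the Betti number bounds survive. In the simply connected case one additionally arranges $b_3(M)=0$ by choosing the initial orbifold with $b_3^{orb}=0$ and exceptional locus contributing nothing to $H^3$, which then obstructs $M$ from being diffeomorphic to a K\"ahler Calabi--Yau (where $b_3$ is even and, by a theorem on Calabi--Yau threefolds, strictly positive).

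\textbf{Main obstacle.} The delicate part is Step 1 together with the spin/singularity bookkeeping: realizing an \emph{arbitrary} finitely presented $G$ as the orbifold fundamental group of an \emph{orientable, spin} hyperbolic four-orbifold \emph{assembled from right-angled $120$-cells}, with singular set of a type whose twistor space resolves crepantly and symplectically. Getting all of these constraints simultaneously — enough flexibility to capture every $G$, yet enough rigidity (right angles, reflection groups) to keep the geometry hyperbolic and the resolution crepant — is where the real work lies; the twistor and resolution steps are then comparatively formal applications of machinery.
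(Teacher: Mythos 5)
Your overall architecture (hyperbolic $4$-orbifold $\to$ twistor space $\to$ symplectic resolution) matches the paper's, but two of your steps contain genuine errors. First, in Step 1 you aim to realize $G$ as the \emph{orbifold} fundamental group $\pi_1^{orb}(X)$. This is impossible for general finitely presented $G$: the orbifold fundamental group of a compact hyperbolic orbifold is a cocompact lattice $\Gamma\subset\SO(4,1)$, hence finitely generated and linear, hence residually finite by Malcev, whereas there exist finitely presented groups that are not residually finite. The correct --- and much subtler --- input is the Panov--Petrunin theorem, which produces $\H^4/\Gamma$ whose \emph{underlying topological space} has fundamental group $G$; this is the quotient of $\Gamma$ by the normal subgroup generated by elements with fixed points, and arranging it to be an arbitrary $G$ is the content of their ``telescopic actions'' construction. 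It cannot be recovered by coning off loops plus Selberg's lemma, and this distinction also matters downstream: the twistor projection is not a genuine fibration over the orbifold points, so one needs a Vietoris-type theorem (Smale) rather than the homotopy exact sequence to transport $\pi_1$ of underlying spaces. (Also, no spin condition is needed anywhere; the twistor space exists for any oriented hyperbolic $4$-orbifold.)

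Second, your mechanism for $b_2,b_3>b$ fails. Increasing the volume by stacking more $120$-cells need not increase the Betti numbers of the underlying space at all --- doubling a chain of $120$-cells always yields $S^4$, whose twistor space has $b_2=1$ and $b_3=0$ no matter how long the chain is. In the paper the large Betti numbers are \emph{created by the resolution}, not inherited from $X$: each irreducible component of the singular locus of $Z/\Gamma$ contributes $1$ to $b_2$ via its exceptional divisor, and a genus-$g$ component contributes $2g$ to $b_3$. To force $b_3>b$ one therefore needs a strengthened form of the Panov--Petrunin theorem guaranteeing at least $c$ genus-$g$ curves of $\Z_2$-singularities in the twistor space; this extra input is absent from your sketch. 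Finally, you treat the symplectic crepant resolution as classical, but there is no off-the-shelf symplectic blow-up of an orbifold singular locus: the locus here contains $\Z_2\oplus\Z_2$ points where three curves of $A_1$-singularities meet, and building the resolution symplectically --- by gluing local toric cuts via a ``semi-local Delzant'' construction --- is the paper's main technical contribution rather than a formal application of ADE machinery.
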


Put briefly, the proof is as follows. We first use the fact that associated to each four-dimensional orientable hyperbolic orbifold is a symplectic six-dimensional Calabi--Yau orbifold, namely its twistor space. Moreover, the topological fundamental group of both orbifolds coincide. Next we appeal to the following theorem, proven in \cite{PP}. This work was motivated by a question due to Gromov \cite[page 12]{gromov}, asking if every compact $PL$ manifold can be obtained as a quotient of $\mathbb H^n$ by a discrete co-compact group of isometries.

\begin{theorem}[Panov--Petrunin \cite{PP}]\label{mainhyper}
Given a finitely presented group $G$, there is a compact orientable hyperbolic orbifold $\mathbb{H}^4/\Gamma$ with fundamental group (of the underlying topological space) isomorphic to $G$. %Moreover, the local orbifold groups are $\Z_2^i$, $i=1,2,3$ acting on $\H^4$.
\end{theorem}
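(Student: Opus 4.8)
This is a theorem of \cite{PP}. Fix a finite presentation $G=\langle x_1,\dots,x_n\mid r_1,\dots,r_m\rangle$. \emph{Step 1: reduce to group theory.} If $\Gamma<\Isom^{+}(\H^4)$ is a cocompact lattice then $\H^4/\Gamma$ is a closed orientable hyperbolic orbifold, and by a classical theorem of Armstrong on fundamental groups of orbit spaces of properly discontinuous actions,
\[
\pi_1\bigl(|\H^4/\Gamma|\bigr)\;\cong\;\Gamma\,/\,\langle\langle\,\Gamma_{\mathrm{ell}}\,\rangle\rangle ,
\]
where $\Gamma_{\mathrm{ell}}\subset\Gamma$ is the set of elements with a fixed point in $\H^4$; as $\Gamma$ is cocompact these are exactly the torsion elements. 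So it is enough to produce a cocompact $\Gamma<\Isom^{+}(\H^4)$ whose quotient by the normal subgroup generated by its torsion elements is isomorphic to $G$.

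\emph{Step 2: geometric pieces and the generators.} The two geometric facts I would lean on are: (a) two hyperbolic orbifolds glued along isometric totally geodesic hypersurfaces again form a hyperbolic orbifold --- the constant-curvature metrics match to infinite order along a totally geodesic locus --- and the result is compact if the pieces are; and (b) inside a right-angled polytope such a hypersurface meets every other face orthogonally, so gluings along different, pairwise disjoint faces do not interfere. Starting from the right-angled $120$-cell $P\subset\H^4$, a tree-like assembly of finitely many copies of $P$ followed by an orientation-preserving reflection quotient yields an orientable hyperbolic $4$-orbifold $\mathcal V$ whose underlying space is a $4$-sphere, so $\pi_1(|\mathcal V|)=1$; its lower-dimensional faces supply a large, controllable family of pairwise disjoint, two-sided, compact totally geodesic hypersurfaces, each itself an assembly of right-angled pieces and hence with trivial topological $\pi_1$. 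Picking $n$ disjoint pairs $\{C_i^-,C_i^+\}$ of such hypersurfaces and, for each pair, deleting the balls they cut off and gluing the two resulting boundary copies to one another by an orientation-coherent isometry, performs $n$ HNN operations; since the vertex and edge pieces are topologically simply connected, van Kampen applied to underlying spaces gives a compact orientable hyperbolic $4$-orbifold $\mathcal W$ with $\pi_1(|\mathcal W|)$ free on $n$ generators, represented by the $n$ ``stable letters'', which I rename $x_1,\dots,x_n$.

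\emph{Step 3: imposing the relations, and the main obstacle.} It remains to kill $r_1,\dots,r_m$. For each $j$ I would seek a compact two-sided totally geodesic hypersurface $C_j\subset\mathcal W$ along which the induced map $\pi_1(|C_j|)\to\pi_1(|\mathcal W|)$ has image the cyclic subgroup generated by the class of $r_j$; then cut $\mathcal W$ along $C_j$ and glue in a \emph{relator block}: a compact hyperbolic $4$-orbifold $\mathcal B_j$ with totally geodesic boundary a copy of $C_j$ and with $\pi_1(|\mathcal B_j|)=1$, so that the class of $r_j$ dies in $\mathcal B_j$. By van Kampen this amalgamation has exactly the effect of adjoining the relation $r_j=1$, and after performing it for $j=1,\dots,m$ one obtains a compact orientable hyperbolic $4$-orbifold $\mathcal O$ with $\pi_1(|\mathcal O|)\cong G$, as required. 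The real content --- and where the combinatorial richness of the right-angled $120$-cell and the flexibility of right-angled hyperbolic geometry are genuinely needed --- lies in Step 3: one must actually exhibit, inside a right-angled assembly (possibly after further HNN moves to make room), a compact totally geodesic hypersurface carrying a prescribed conjugacy class, fill it by a hyperbolic block with trivial topological $\pi_1$, and arrange that the $m$ such surgeries together with the $n$ HNN moves of Step 2 can all be performed along pairwise disjoint loci, so that cocompactness and hyperbolicity survive every stage. Organising this --- in effect, showing that cocompact hyperbolic lattices admit enough such ``telescopic'' cut-and-paste moves to realise an arbitrary finite presentation after killing torsion --- is the technical heart of \cite{PP}.
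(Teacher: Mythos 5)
Your Step 1 (Armstrong's theorem, reducing the problem to producing a cocompact $\Gamma$ with $\Gamma/\langle\langle\mathrm{torsion}\rangle\rangle\cong G$) is correct and is indeed the implicit starting point. But your Steps 2--3 take a per-group cut-and-paste route that is not what \cite{PP} does, and Step 3 contains a genuine, unfilled gap --- one you acknowledge but do not close. You posit, for each relator $r_j$, a compact two-sided totally geodesic hypersurface $C_j\subset\mathcal W$ whose $\pi_1$-image is exactly the cyclic subgroup generated by $r_j$, together with a compact hyperbolic ``relator block'' with totally geodesic boundary isometric to $C_j$ and simply connected underlying space. Neither object is constructed, and neither can be produced by general position or flexibility arguments: totally geodesic hypersurfaces in a compact hyperbolic $4$-orbifold are rigid and scarce, and in a right-angled assembly the available ones are the walls of the reflection structure, whose topology and whose position in $\pi_1$ of the underlying space are tightly constrained. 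Realising an arbitrary word in the stable letters this way is the entire difficulty of the theorem, so the proposal as written proves only the easy reduction plus a free-group case.

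The actual argument of \cite{PP}, recalled in \S2.2 of the paper, avoids per-group surgery altogether. One builds a \emph{single} ``telescopic'' orbifold: start with the two-dimensional orbihedron with one vertex, two loops $g$ and $r$, and four $2$-cells attached along $g$, $r$, $gr$, $gr^{-1}$, each cell containing three $\Z_2$ orbi-points. The telescopic property --- that the finite orbi-covers of this one object already realise every finitely presented group as the fundamental group of the underlying space --- is established combinatorially at the level of this $2$-complex, where one has complete freedom to unwrap loops and cells in covers; the relations are imposed by covering-space theory, not by geometric amalgamation along totally geodesic loci. Hyperbolisation is then done once and for all: the orbihedron is thickened by attaching right-angled $120$-cells and the resulting orbifold with boundary is doubled. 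Every group $G$ is then obtained by passing to an appropriate finite orbi-cover of this fixed compact orientable hyperbolic $4$-orbifold. This universality mechanism is precisely what replaces your missing Step 3.
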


From here, the main work required to prove Theorem \ref{main} is to show that all symplectic $6$-dimensional orbifolds generated by Theorem \ref{mainhyper} admit crepant symplectic resolutions (that do not change the fundamental group). A similar strategy was already applied in \cite{FP} to a particular simply-connected hyperbolic orbifold; this led to the first known example of a simply-connected non-K\"ahler symplectic Calabi--Yau. In the simply-connected case of Theorem~\ref{main} we can consider instead a simpler collection of orbifolds which do not come from Theorem \ref{mainhyper} but which have the same type of singularities; this leads to an infinite series of \emph{non-K\"ahler} symplectic Calabi--Yau 6-manifolds. We state this as a separate result; see \S\ref{simply} for the proof.

\begin{theorem}\label{non-Kahler_simply-connected}
Given any integer $b$ there exists a simply-connected compact symplectic Calabi--Yau manifold with $b_3=0$ and $b_2\geq b$. In particular, these symplectic manifolds are not diffeomorphic to Kähler manifolds with $c_1=0$.
\end{theorem}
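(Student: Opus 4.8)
The plan is to realise the desired manifolds as crepant symplectic resolutions of twistor spaces of suitable simply-connected hyperbolic $4$-orbifolds, and then to obstruct a K\"ahler structure with vanishing first Chern class using only the Betti number $b_3$. Concretely, for each $b$ I would first choose an orientable hyperbolic $4$-orbifold $M=\mathbb H^4/\Gamma$ with the following features: its underlying topological space is simply-connected with $b_1=b_3=0$; its singular locus is a union of strata coming from the lower-dimensional faces of a right-angled polytope, so that each stratum is a \emph{cell} and in particular simply-connected; the isotropy groups are of exactly the type handled by the resolution results of the previous sections; and the number of codimension-two strata can be made arbitrarily large by enlarging the polytope (for instance by taking $\Gamma$ to be the orientation-preserving subgroup of the reflection group of a ball-like assembly of right-angled $120$-cells, so that $M$ has underlying space $S^4$). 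Passing to the twistor space $\pi\colon Z\to M$ then produces, as recalled in the Overview, a symplectic Calabi--Yau $6$-orbifold with $\pi_1(Z)=\pi_1(M)=1$. Since $Z\to M$ is an $S^2$-orbifold-bundle on which the symplectic class restricts non-trivially to the fibre, the Leray--Serre spectral sequence degenerates, so $H^\ast(Z;\Q)\cong H^\ast(M;\Q)\otimes H^\ast(S^2;\Q)$; in particular $b_3(Z)=b_1(M)+b_3(M)=0$ and $b_2(Z)=b_2(M)+1$.

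Next I would invoke the crepant symplectic resolution constructed earlier in the paper for singularities of this type to obtain a smooth symplectic Calabi--Yau $6$-manifold $\widetilde Z$, with $\pi_1(\widetilde Z)=\pi_1(Z)=1$ because the resolution does not change the fundamental group. The resolution is a local modification along the singular strata; since those strata are simply-connected and each exceptional divisor fibres, with rational fibres, over such a stratum, a Mayer--Vietoris computation shows that the odd Betti numbers are untouched, so $b_3(\widetilde Z)=b_3(Z)=0$, while $b_2$ strictly increases with each resolved codimension-two stratum. Choosing the polytope large enough therefore gives $b_2(\widetilde Z)\ge b$, which proves the first assertion.

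For the ``in particular'', suppose some $\widetilde Z$ were diffeomorphic to a compact K\"ahler manifold $W$ with $c_1(W)=0$ in $H^2(W;\R)$. Then $W$ is simply-connected, so by Yau's theorem it admits a Ricci-flat K\"ahler metric, and the Beauville--Bogomolov decomposition, together with $\pi_1(W)=1$ and $\dim_{\C}W=3$ (which rules out flat torus factors and, by parity, hyperk\"ahler factors), forces $W$ to be an irreducible Calabi--Yau threefold. It therefore carries a holomorphic volume form, so $h^{3,0}(W)=1$ and hence $b_3(W)\ge 2$. This contradicts $b_3(\widetilde Z)=b_3(W)=0$, so no such $W$ exists.

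The genuinely delicate part is the first step, i.e.\ exhibiting orbifolds that are simultaneously simply-connected, have $b_1=b_3=0$, carry only the resolvable type of singularity, and possess an unbounded number of codimension-two strata, and then checking carefully that neither forming the twistor space nor performing the crepant resolution creates any odd-degree cohomology or alters $\pi_1$. Everything else is formal once the twistor correspondence and the resolution theorem of the preceding sections are in place.
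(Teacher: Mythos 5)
Your proposal follows essentially the same route as the paper: double an assembly of right-angled $120$-cells to obtain a hyperbolic orbifold with underlying space $S^4$, pass to the twistor space, perform the crepant resolution, and track $b_2$ and $b_3$ through both steps; the only cosmetic differences are that you compute $H^*(Z/\Gamma;\Q)$ by Leray--Hirsch on the orbifold $S^2$-bundle where the paper passes to a torsion-free finite-index subgroup and takes invariants of the deck group, and you spell out the $h^{3,0}=1\Rightarrow b_3\ge 2$ obstruction to a K\"ahler structure that the paper leaves implicit. The argument is correct.
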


We carry out the resolution of the Calabi--Yau orbifolds in \S\ref{resolution} via symplectic cutting, using the fact that the singularities are locally toric and that these local pictures are mutually compatible in a precise sense. The idea is to find a neighbourhood of the singular locus whose complement is a manifold with corners. We then collapse certain tori in the boundary of the complement to produce a smooth manifold with symplectic structure. This collapsing procedure---a kind of  ``semi-local Delzant construction''--- is introduced in \S\ref{cutting_thm}. We are hopeful it will find applications besides the one envisaged here. In \S\ref{topology} we check that the resolution has not altered the fundamental group nor the fact that $c_1=0$ and, finally, that the singularities of $\H^4/\Gamma$ can be chosen so that the resolution has arbitrarily large $b_2$ and $b_3$.

The interest in the construction of symplectic Calabi--Yau manifolds goes back at least as far as the article \cite{STY}  where several potential candidates for non-K\"ahler simply-connected symplectic Calabi--Yau $6$-manifolds were constructed, although whether these examples are genuinely not K\"ahler remains unresolved. More recently, two articles \cite{Ah,BK} inspired by \cite{STY} give simple constructions of several symplectic Calabi--Yau six-manifolds: \cite{Ah} contains examples with fundamental groups $0$ and $\mathbb Z$ while \cite{BK} contains an infinite series of examples of non-simply-connected manifolds with bounded Betti numbers. Certain symplectic Calabi-Yau six-manifolds appeared as well in \cite{TY} where the question 
of existence of symplectic six-manifolds with given fundamental group and given Chern numbers was 
studied.

Finally, we mention that a large collection of symplectic Calabi--Yau 6-manifolds with hyperbolic fundamental group was given by Reznikov, who first uncovered the link between hyperbolic and symplectic geometry in \cite{Re}. Unfortunately Reznikov's article was almost unknown to the community of symplectic geometers until his construction was rediscovered and exploited in \cite{FP0} (where it was also observed that they have $c_1=0$).

\subsubsection*{Acknowledgements}

Firstly, we would like to thank Anton Petrunin, without whom this work would not exist. We would also like to thank the Simons Center for Geometry and Physics, at Stony Brook University for providing a stimulating working environment. Dmitri Panov is supported by a Royal Society University Research Fellowship.

\section{From hyperbolic to symplectic}

\subsection{A coadjoint orbit}\label{coadjoint}

The link between hyperbolic geometry in even dimensions and symplectic geometry was first noticed by Reznikov \cite{Re} using the language of twistor spaces. We briefly give an alternative description via coadjoint orbits, which first appeared in \cite{FP}. We focus purely on the case of four dimensions, which leads to symplectic Calabi--Yau manifolds, although similar considerations apply in higher even dimensions leading to symplectic Fano manifolds.

Let $G$ be a Lie group. It is a standard fact that there is a $G$-invariant symplectic structure on the coadjoint orbits of $G$. We will apply this to a certain coadjoint orbit of $\SO(4,1)$.

The Lie algebra $\so(4,1)$ is  5$\times$5 matrices of the form
\[
\label{matrix}
\left(
\begin{array}{cc}
0 & u^t\\
u & A
\end{array}
\right)
\]
where $u$ is a column vector in $\R^4$ and $A \in \so(4)$. The Killing form is non-degenerate on $\so(4,1)$ and so gives an equivariant isomorphism $\so(4,1) \cong \so(4,1)^*$. We consider the orbit $Z$ of
\[
\xi=\left(
\begin{array}{cc}
0 & 0\\
0 & J_0
\end{array}
\right)
\]
where $J_0\in \so(4)$ is a choice of almost complex structure on $\R^4$ (i.e., $J_0^2=-1$). The subalgebra $\mathfrak h$ of matrices commuting with $\xi$ is those with $u=0$ and $[A,J_0]=0$, i.e., $\mathfrak h = \u(2) \subset \so(4) \subset\so(4,1)$. It follows that the stabilizer of $\xi$ is $\U(2)$ and so $Z \cong \SO(4,1)/\U(2)$.

The key facts we will need about $Z$ are summarized as follows. (See \cite{FP} for details and proofs.)

\begin{enumerate}
\item{\bfseries Calabi--Yau structure.}
As a coadjoint orbit, $Z$ carries an $\SO(4,1)$-invariant symplectic structure.

There is also $\SO(4,1)$-invariant compatible almost complex structure $J_{\text{ES}}$ on $Z$ (named for Eells and Salamon; it is an instance of their almost complex structure on twistor spaces~\cite{eells-salamon}).

The $J_{\text{ES}}$-canonical bundle admits an $\SO(4,1)$-invariant nowhere-vanishing section, making $Z$ a homogenous Calabi--Yau manifold.
\item{\bfseries Twistor fibration.}
The inclusion $\U(2) \subset \SO(4)$ induces an $\SO(4,1)$-equivariant fibration $t \colon Z \to \SO(4,1)/\SO(4) \cong \H^4$.

The fibres $\SO(4)/\U(2)\cong S^2$ of $t$ are $J_{\text{ES}}$-holomorphic and hence symplectic spheres in $Z$.

Fixing an orientation on $\H^4$, the fibre $t^{-1}(p)$ is canonically identified with the set of linear complex structures on $T_p\H^4$ which are orthogonal with respect to the hyperbolic metric and induce the chosen orientation. In other words, $Z$ is the twistor space of $\H^4$.

\item{\bfseries Twistor lifts of 2-planes.}
Given a totally geodesic embedding $\H^2 \to \H^4$ and an orientation on $\H^2$ there is a unique lift $\H^2 \to Z$ which is $J_{\text{ES}}$-antiholomorphic (with respect to the natural complex structure on $\H^2$) and hence symplectic.

The lift of a point $p \in \H^2 \subset \H^4$ is the unique linear complex structure on $T_p\H^4$ in $t^{-1}(p)$ which makes $T_p\H^2 \subset T_p\H^4$ complex linear. (More generally this recipe gives a lift of any immersed oriented surface; these so-called ``twistor lifts'' originated in \cite{eells-salamon}.)

The lift corresponding to the opposite orientation on $\H^2$ is given by composing the first lift with the antipodal map on each fibre of $t$.

Given two distinct oriented 2-planes in $\H^4$, their lifts meet if and only if the  2-planes meet orthogonally in $\H^4$ and their orientations combine to give the chosen one on $\H^4$.
\item{\bfseries Calabi--Yau orbifold quotients.}
By $\SO(4,1)$-equivariance, all the above points apply to hyperbolic manifolds $\H^4/\Gamma$ where $\Gamma \subset \SO(4,1)$. They carry a 2-sphere bundle $Z/\Gamma \to \H^4/\Gamma$, the total space of which is a symplectic Calabi--Yau manifold.

Similarly if $\H^4/\Gamma$ is a hyperbolic orbifold, $Z/\Gamma$ is a symplectic Calabi--Yau orbifold. The orbifold points in $Z/\Gamma$ map to the orbifold points of $\H^4/\Gamma$ under $Z/\Gamma \to \H^4/\Gamma$.

Away from the singular locus, this map is still an $S^2$-fibre bundle. If $D \subset \SO(4)$ is the orbifold group of a point $p \in \H^4/\Gamma$ then the fibre over $p$ is $D\backslash\SO(4)/\U(2)$ (although one should remember that points in the fibre over $p$ may also have orbifold singularities in the transverse directions).

An immersed orientable totally-geodesic sub-orbifold $S \subset \H^4/\Gamma$ has two disjoint lifts to $Z/\Gamma$, distinguished by a choice of orientation on $S$.
\end{enumerate}

\subsection{The 120-cell and hyperbolic orbifolds}\label{120cell}

Here we recall very briefly the construction given in \cite{PP} of the $4$-dimensional hyperbolic orbifolds occurring in Theorem \ref{mainhyper} and, in particular, describe the singularities of these orbifolds.

In \cite{PP} one constructs a compact orientable four-dimensional hyperbolic orbifold that has the {\it telescopic property}, which means that its finite orbi-covers can have arbitrary finitely-presented fundamental groups. To construct this telescopic orbifold one starts with the two-dimensional orbihedron that contains one vertex and two loops $g$ and
$r$ to which fours two-cells are attached along words $g$, $r$, $gr$, $gr^{-1}$;
moreover the interior of each cell contains three orbi-points with stabilizers $\mathbb Z_2$.
Such an orbihedron has telescopic property. The orbihedron is
thickened by attaching to it a collection of right-angled hyperbolic $120$-cells. We recall that a hyperbolic $120$-cell is a regular 4-dimensional hyperbolic Coxeter polytope with $120$ geodesic three-faces, that are dodecahedrons. The term ``right-angled'' means that the neighboring three-faces of the polytope intersect in the angle $\frac{\pi}{2}$. After this thickening one obtains a 4-dimensional hyperbolic orbifold with boundary that can be doubled; this produces the telescopic orbifold.

The construction in Theorem  \ref{mainhyper} is done in such a way that the group $\Gamma$ is a finite index subgroup of the group of isometries of $\mathbb H^4$ generated by reflections
in the faces of the $120$-cell.
The only important information that we need to retain about $\Gamma$ is the action of stabilizers of points in $\mathbb H^4$. The stabilizer sub-groups can be only $\mathbb Z_2$, $\mathbb Z_2^2$, and $\mathbb Z_2^3$. To describe the action of $\mathbb Z_2^3$, we use coordinates $x_1, x_2, x_3, x_4$ in the ball model of $\mathbb{H}^4$. The action is generated by reflections in the coordinate 2-planes, so that each element of $\Z_2^{3}$ acts by an orientation--preserving isometry of the form
\begin{equation}\label{orbiaction}
(x_1,x_2,x_3,x_4)\to (\pm x_1, \pm x_2, \pm x_3, \pm x_4)
\end{equation}
where an even number of signs are reversed. Meanwhile, the actions of $\mathbb Z_2$ and $\mathbb Z_2^2$ are sub-actions of $\mathbb Z_2^{3}$. The simplest example of a hyperbolic orbifold with precisely these singularities is given by doubling a right-angled 120-cell. For more on examples of this kind see \S\ref{simply}.

\subsection{Singularities in the twistor space}\label{twistor_singularities}

As was described in \S\ref{coadjoint}, given an oriented hyperbolic orbifold $\mathbb{H}^4/\Gamma$, its twistor space $Z/\Gamma$ is a symplectic Calabi--Yau orbifold. In the following section we will explain, for those orbifolds arising from Theorem \ref{mainhyper}, how to resolve the singularities to produce smooth symplectic Calabi--Yaus. First we give a local description of these singularities, considering the quotient of the twistor space $Z$ of $\mathbb H^4$ by the action of $\mathbb Z_2^{3}$ given by (\ref{orbiaction}).

For each pair $i\ne j$, let $\Pi_{ij}$ be the geodesic two-plane in $\mathbb H^4$ corresponding to the coordinate two-plane $(x_i, x_j)$. To each point $x \in \Pi_{ij}$ we can associate two orthogonal almost complex structures $\pm J_x, J_x^2=-\rm Id$ on $T_x\mathbb{H}^4$ with respect to which $T_x\Pi_{ij}$ is a complex line. This gives us two lifts of each $\Pi_{ij}$ to the twistor space $Z$ of $\mathbb H^4$. Given $\Pi_{ij}$, each lift intersects precisely one lift of the orthogonal plane $\Pi_{ij}^{\perp}$ in the twistor fiber over the fixed point $p$, the other lifts intersecting in the antipodal point.  Altogether we have $12$ different lifts for different $(i,j)$ and they intersect the twistor fiber over $p$ in a collection of $6$ points arranged as the six vertices of the octahedron. There are no other intersections of the lifts in $Z$.

The action of $\mathbb Z_2^{3}$ on the twistor fiber over $p$ is not faithful, since the central symmetry of $\mathbb H^4$ with respect to $p$ acts trivially. Each of the $6$ points on the fiber where a lift of $\Pi_{ij}$ meets a lift of $\Pi_{ij}^{\perp}$ has stabilizer $\mathbb Z_2^{2}$ equal to the subgroup of $\mathbb Z_2^{3}$ that leaves invariant $\Pi_{ij}$ and $\Pi_{ij}^{\perp}$. Finally, the stabilizer of a point on a lift of $\Pi_{ij}$ not in the fiber over $p$ equals $\mathbb Z_2$.
All this is summarized in the following lemma.

\begin{lemma}
The action of $\mathbb Z_2^{3}$ on the twistor space $Z$ has two types of points with non-trivial stabilizer:
\begin{enumerate}
\item
There are $6$ points with stabilizers $\mathbb Z_2^{2}$, arranged as the vertices of an octahedron on the central twistor fibre. Each pair of opposite vertices forms an orbit of the $\Z_2^{3}$ action.
\item
There are also points with stabilizer $\mathbb Z_2$. These are the union of the central twistor fibre---minus the six points---together with the twelve surfaces which are the lifts of the six totally-geodesic coordinate planes $\Pi_{ij} \subset \mathbb{H}^4$.
\end{enumerate}
\end{lemma}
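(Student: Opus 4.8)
The plan is to analyse the $\mathbb Z_2^3$-action on $Z$ fibre by fibre over $\mathbb H^4$. Writing $t\colon Z\to\mathbb H^4$ for the twistor fibration, I would use the identification from \S\ref{coadjoint} of the fibre $t^{-1}(q)$ with the two-sphere of orthogonal complex structures on $T_q\mathbb H^4$ inducing the chosen orientation: a point $z\in t^{-1}(q)$ corresponds to such a $J$, and for $\gamma\in\mathbb Z_2^3$ with $\gamma q=q$ the point $\gamma\cdot z$ corresponds to $d\gamma_q\,J\,(d\gamma_q)^{-1}$. Write $\sigma\in\mathbb Z_2^3$ for the central symmetry $(x_1,x_2,x_3,x_4)\mapsto(-x_1,-x_2,-x_3,-x_4)$. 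The two things to pin down are the fibrewise action of each individual group element and the bookkeeping of which elements fix which points of $\mathbb H^4$.

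First I would record the fibrewise action of a single generator. Among the seven non-identity elements of $\mathbb Z_2^3$, six are ``$\pi$-rotations'' --- each fixes one coordinate two-plane $\Pi_{ij}$ pointwise and acts on $\Pi_{ij}^\perp$ as a rotation by $\pi$ --- and the seventh is $\sigma$, whose only fixed point in $\mathbb H^4$ is the centre $p$. Since $d\sigma_p=-\mathrm{Id}$ commutes with every complex structure, $\sigma$ acts trivially on the whole fibre $t^{-1}(p)$. If $\gamma$ is the $\pi$-rotation fixing $\Pi_{ij}$, then at any point $q$ that it fixes, the differential $d\gamma_q$ has eigen-two-planes $T_q\Pi_{ij}$ and its orthogonal complement, so $J\in t^{-1}(q)$ is $\gamma$-fixed precisely when $J$ preserves $T_q\Pi_{ij}$; there are exactly two such $J$, namely the two twistor lifts of $\Pi_{ij}$ (equally, of $\Pi_{ij}^\perp$) through $q$. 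Hence on every fibre it preserves, $\gamma$ acts as a rotation by $\pi$ of $S^2$ fixing exactly those two points. A small orientation check enters here: $\gamma$ acts orientation-preservingly on the plane it rotates, so it genuinely fixes, rather than interchanges, the two lifts.

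Next I would stratify $Z$ and run through the possibilities for $\Stab(q)$. Since $\mathbb Z_2^3$ consists of the sign-changes reversing an even number of coordinates, $\Stab(q)$ is trivial unless $q=p$ or at least two of the coordinates of $q$ vanish; so every $z\in Z$ with non-trivial stabilizer lies over the central fibre, over some $\Pi_{ij}\setminus\{p\}$, or over some coordinate axis. Over a point $q$ of $\Pi_{ij}$ with exactly two vanishing coordinates, $\Stab(q)\cong\mathbb Z_2$ is generated by the $\pi$-rotation fixing $\Pi_{ij}$, so by the previous step the non-trivially stabilised points over $q$ are exactly the two lifts of $\Pi_{ij}$, each with stabilizer $\mathbb Z_2$. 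Over a point of a coordinate axis, say the $x_1$-axis $=\Pi_{12}\cap\Pi_{13}\cap\Pi_{14}$, one gets $\Stab(q)\cong\mathbb Z_2^2$ acting effectively on $t^{-1}(q)\cong S^2$ (no nontrivial element acts as $\pm\mathrm{Id}$ on $T_q$), hence as the standard Klein four-group of $\pi$-rotations about three perpendicular axes; its six fixed points are the two lifts each of $\Pi_{12},\Pi_{13},\Pi_{14}$ --- and no $J$ preserves two of these planes, as they share a line --- each with stabilizer $\mathbb Z_2$. Finally, over $p$ the whole group acts on $t^{-1}(p)$ through $\mathbb Z_2^3/\langle\sigma\rangle\cong\mathbb Z_2^2$, again effectively and so again as the standard Klein four-group on $S^2$; its six fixed points are the octahedron vertices, i.e.\ the six points where the twelve lifts of the $\Pi_{ij}$ meet $t^{-1}(p)$, each with fibrewise stabilizer $\mathbb Z_2$ and hence total stabilizer the order-four preimage --- the subgroup of $\mathbb Z_2^3$ leaving $\Pi_{ij}$ and $\Pi_{ij}^\perp$ invariant together with their orientations, which contains $\sigma$ --- whereas every other point of $t^{-1}(p)$ has stabilizer exactly $\langle\sigma\rangle\cong\mathbb Z_2$.

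Assembling the strata then gives the lemma: the points with stabilizer $\mathbb Z_2^2$ are exactly the six octahedron vertices on the central fibre, which form three orbits, each a pair of antipodal vertices (the stabilizer contains $\sigma$, one of the remaining six elements interchanges the two vertices of each pair, and no element mixes distinct pairs), while all other non-trivially stabilised points --- the central fibre together with the twelve lift surfaces, minus these six vertices --- have stabilizer $\mathbb Z_2$. I expect the only genuinely delicate points to be the orientation bookkeeping in the second step and the identification, in the last case, of the order-four stabilizer at an octahedron vertex with the subgroup preserving the relevant pair of oriented planes. The octahedral arrangement of the six points, the count of twelve lifts, and the absence of any other mutual intersections of lifts in $Z$ are already recorded in the discussion preceding the lemma and follow from the third point of \S\ref{coadjoint}.
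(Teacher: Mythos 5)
Your argument is correct and takes essentially the same route as the paper, whose ``proof'' of this lemma is the fibrewise discussion immediately preceding it: identify the fibre with the sphere of compatible complex structures, note that the central symmetry acts trivially on the central fibre, and locate the fixed loci of the $\pi$-rotations as the twistor lifts of the $\Pi_{ij}$. Your explicit stratification by the base-point stabilizer and your orientation bookkeeping---in particular the observation that the $\Z_2^2$-stabilizer of an octahedron vertex is the subgroup preserving $\Pi_{ij}$ and $\Pi_{ij}^{\perp}$ together \emph{with their orientations} (every element of $\Z_2^3$ preserves the coordinate planes setwise)---merely make the paper's sketch more precise.
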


We also need the following description of the action of the stabilizer groups near these points.

\begin{lemma}\label{local_model_orbipoints}
We have the following local models for the singularities of $Z/\Z_2^3$.
\begin{enumerate}
\item
Let $q \in Z$ be a point with stabilizer $\Z_2$. There are local complex coordinates $(z_1,z_2,z_3)$ centred at $q$, in which the symplectic structure is standard and in which the $\Z_2$ action is generated by the transformation
\begin{equation}\label{z2act}
(z_1,z_2,z_3) \mapsto (-z_1, -z_2, z_3).
\end{equation}
\item
Let $p\in Z$ be a point with stabilizer $\Z_2^2$. There are local complex coordinates $(z_1, z_2, z_3)$ centred at $p$, in which the symplectic structure is standard and in which each element of the stabilizer of $p$ acts by a transformation of the form
\begin{equation}\label{z2z2act}
(z_1,z_2,z_3)\mapsto (\pm z_1,\pm z_2, \pm z_3),
\end{equation}
where an even number of signs are reversed.
\end{enumerate}
\end{lemma}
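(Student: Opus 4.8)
The plan is to obtain both local models from the equivariant Darboux theorem, so that the whole of the lemma reduces to identifying the isotropy representation of the stabilizer on the tangent space at the fixed point. Recall that if a finite group $K$ acts symplectically on $(M,\omega)$ fixing a point $m$, then some $K$-invariant neighbourhood of $m$ is $K$-equivariantly symplectomorphic to a neighbourhood of the origin in $(T_mM,\omega_m)$ equipped with the linear isotropy action $k\mapsto dk_m$. So in each case I would first compute how the stabilizer acts on $T_qZ$ (respectively $T_pZ$), and then put $\omega_m$ into standard form using symplectic bases adapted to the eigenspaces of that linear action; the two steps together yield complex coordinates of the asserted shape.

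To compute the isotropy representations I would use the splitting of $TZ$ along the twistor fibration $t\colon Z\to\H^4$ into a horizontal summand, identified with $t^*T\H^4$ by the Levi-Civita connection, and a vertical summand tangent to the twistor fibre $S^2$. This splitting is $\SO(4,1)$-invariant, hence preserved by every isometry of $\H^4$ acting on $Z$. The facts I would establish are: (a) for a point $q=(x,J)$ lying on a twistor lift of a coordinate plane $\Pi$, the reflection $\sigma$ of $\H^4$ in $\Pi$ fixes $q$, the differential $d\sigma$ is complex-linear on $T_x\H^4$ with respect to $J$, and splits it into the complex line $T_x\Pi$ (eigenvalue $+1$) and its orthogonal complement (eigenvalue $-1$), whereas the central symmetry $c$ of $\H^4$ (point reflection in the centre of the ball model) acts on $T_x\H^4$ as $-\mathrm{Id}$; and (b) on the vertical summand such a reflection $\sigma$ acts through the quotient $\SO(4)\to\SO(3)$ coming from $\SO(4)/\U(2)\cong S^2$, and since its fixed set in a twistor fibre is exactly the two lifts of $\Pi$ and not the whole fibre, it acts as a rotation by $\pi$; hence $d\sigma=-\mathrm{Id}$ on the vertical line at any fixed point, while $c$ acts trivially on every twistor fibre. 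Establishing (b) is the step I expect to be the main obstacle.

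\emph{Part (1).} A point $q$ with stabilizer exactly $\Z_2$ lies either on a twistor lift of a coordinate plane $\Pi_{ij}$, with generator the reflection of $\H^4$ in $\Pi_{ij}$, or on the central twistor fibre, with generator the central symmetry of $\H^4$. In both cases facts (a) and (b) show the generator acts on $T_qZ$ with a two-real-dimensional $(+1)$-eigenspace and a four-real-dimensional $(-1)$-eigenspace. These eigenspaces are $\omega_q$-orthogonal, so each is a symplectic subspace; splitting the $(-1)$-eigenspace into two symplectic planes and choosing a symplectic basis of each of the three planes gives Darboux coordinates in which the generator reverses all four coordinates on the $(-1)$-eigenspace and fixes the two on the $(+1)$-eigenspace. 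Grouping these as complex coordinates $(z_1,z_2,z_3)$, with $z_1,z_2$ on the $(-1)$-eigenspace and $z_3$ on the $(+1)$-eigenspace, turns the generator into $(z_1,z_2,z_3)\mapsto(-z_1,-z_2,z_3)$, and equivariant Darboux transports this model to a neighbourhood of $q$.

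\emph{Part (2).} A point $p$ with stabilizer $\Z_2^2$ is one of the six octahedron vertices: the intersection of a twistor lift of some $\Pi_{ij}$ with a twistor lift of the orthogonal plane $\Pi_{ij}^\perp$, lying over the centre $o$. The three non-trivial elements of its stabilizer are the reflections of $\H^4$ in $\Pi_{ij}$ and in $\Pi_{ij}^\perp$ and the central symmetry, and by facts (a) and (b) each acts on $T_pZ$ with a four-real-dimensional $(-1)$-eigenspace. Intersecting eigenspaces, $T_pZ$ decomposes under $\Z_2^2$ as the direct sum of the three $\omega_p$-orthogonal symplectic planes $T_o\Pi_{ij}$, $T_o\Pi_{ij}^\perp$ and the vertical line $T_pS^2$, which realise the three distinct non-trivial characters of $\Z_2^2$ (so the trivial character is absent). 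Choosing a symplectic basis of each plane gives complex coordinates $(z_1,z_2,z_3)$ with $\omega_p$ standard in which $\gamma\in\Z_2^2$ acts by $z_k\mapsto\chi_k(\gamma)z_k$ for the three distinct non-trivial characters $\chi_1,\chi_2,\chi_3$; since $\chi_1\chi_2\chi_3$ is the trivial character, every $\gamma$ reverses an even number of signs, and equivariant Darboux again transports the model. Apart from the identification of the isotropy representations --- within which the vertical computation via $\SO(4)/\U(2)\cong S^2$ is the crux --- the only ingredients are the equivariant Darboux theorem and the elementary fact that the isotypic decomposition of a tangent space under a finite abelian symplectic action consists of mutually symplectically orthogonal symplectic subspaces.
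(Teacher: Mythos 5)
Your proof is correct and follows the same route as the paper, which deduces the lemma directly from the equivariant Darboux theorem of Guillemin--Sternberg. The only difference is that you explicitly carry out the computation of the linear isotropy representations (via the horizontal/vertical splitting of $TZ$ and the two-fixed-point argument on the twistor fibre), a step the paper leaves implicit in its discussion of \S\ref{twistor_singularities}; your identifications of the eigenspaces and characters are accurate.
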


This is a direct consequence of the following theorem \cite{GS}:

\begin{theorem}[Equivariant Darboux Lemma] Let $(M, \omega)$ be a
$2n$-dimensional symplectic manifold equipped with a symplectic action
of a compact Lie group $G$, and let $q$ be a fixed point. Then there exists a $G$-invariant chart $(U, x_1,...,x_n, y_1,...,y_n)$ centered at $q$ and $G$-equivariant with respect to a linear action of $G$ on $\mathbb R^{2n}$ such that
\[
\omega|_U =\sum_{k=1}^n dx_k \wedge dy_k.
\]
\end{theorem}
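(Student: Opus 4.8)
The plan is to prove this by the $G$-equivariant version of Moser's trick. Since the statement is local at $q$, I would first pass to a standard model: average any Riemannian metric over $G$ with respect to Haar measure to obtain a $G$-invariant metric; as $q$ is fixed, its exponential map $\exp_q$ intertwines the isotropy representation $\rho\colon G\to\GL(T_qM)$, $\rho(g)=(dg)_q$, with the $G$-action near $q$. Composing with any linear isomorphism $L\colon T_qM\to\R^{2n}$ carrying $\omega_q$ to the standard form $\omega_0:=\sum_k dx_k\wedge dy_k$ (such an $L$ exists by the linear Darboux theorem; it need not respect $G$), one reduces to the following picture: $q=0$ in an open ball $B\subset\R^{2n}$, the group $G$ acts linearly by $\rho'(g):=L\rho(g)L^{-1}$, and $\omega$ is a $G$-invariant symplectic form on $B$ with $\omega|_0=\omega_0$. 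Because each $\rho(g)$ preserves $\omega_q$, the linear action $\rho'$ preserves $\omega_0$, so $\rho'(G)\subset\Sp(2n,\R)$; this $\rho'$ is the linear action referred to in the statement.

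Next I would interpolate. Set $\omega_t:=(1-t)\omega_0+t\omega$ for $t\in[0,1]$; each $\omega_t$ is closed, $G$-invariant, and satisfies $\omega_t|_0=\omega_0$, so after shrinking $B$ (nondegeneracy at $0$ is an open condition and $[0,1]$ is compact) every $\omega_t$ is symplectic on $B$. Put $\sigma:=\tfrac{d}{dt}\omega_t=\omega-\omega_0$, a closed $G$-invariant $2$-form. By the equivariant Poincar\'e lemma on the ball---the radial homotopy $H(s,x)=sx$ commutes with the linear $G$-action, so the associated homotopy operator $K$ built from radial contraction is $G$-equivariant---one obtains a $G$-invariant $1$-form $\mu:=K\sigma$ with $d\mu=\sigma$, and moreover $\mu|_0=0$ because $K$ applied to a $2$-form contracts with the radial vector field, which vanishes at the origin.

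Finally I would define the time-dependent vector field $X_t$ by $\iota_{X_t}\omega_t=-\mu$: this is well defined since $\omega_t$ is nondegenerate, it is $G$-invariant since $\mu$ and $\omega_t$ are, and it vanishes at $0$ since $\mu|_0=0$. Because $X_t(0)=0$, its flow $\psi_t$ exists for all $t\in[0,1]$ on some smaller ball $B'\ni 0$, fixes $0$, and is $G$-equivariant (being the flow of a $G$-invariant vector field). Cartan's formula together with $d\omega_t=0$ gives the Moser identity
\[
\tfrac{d}{dt}\bigl(\psi_t^*\omega_t\bigr)=\psi_t^*\bigl(\mathcal L_{X_t}\omega_t+\sigma\bigr)=\psi_t^*\bigl(d\,\iota_{X_t}\omega_t+d\mu\bigr)=\psi_t^*\bigl(-d\mu+d\mu\bigr)=0,
\]
so $\psi_1^*\omega=\psi_1^*\omega_1=\psi_0^*\omega_0=\omega_0$ on $B'$; equivalently $(\psi_1^{-1})^*\omega_0=\omega$. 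Thus $\psi_1^{-1}$, precomposed with the equivariant identifications $L\circ\exp_q^{-1}$, is the required $G$-equivariant chart in which $\omega=\sum_k dx_k\wedge dy_k$.

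I expect the only delicate points to be bookkeeping rather than substance: keeping every object ($\mu$, $X_t$, $\psi_t$) genuinely $G$-invariant---which is exactly what averaging over the compact group $G$ and the equivariant Poincar\'e lemma supply---and securing uniform-in-$t$ nondegeneracy of $\omega_t$ together with existence of the flow up to time $1$, both of which follow by shrinking the ball and from the fact that $X_t$ vanishes at the fixed point. These are precisely the places where compactness of $G$ enters.
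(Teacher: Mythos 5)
The paper does not prove this statement at all: it is quoted verbatim from Guillemin--Sternberg [GS] and used as a black box. Your equivariant Moser argument is correct and complete --- linearize at the fixed point via the exponential map of an averaged invariant metric, interpolate between $\omega$ and $\omega_0$, produce an invariant primitive vanishing at the origin by the radially contracted (hence equivariant) homotopy operator, and flow --- and this is in fact the standard proof of the result in the cited reference, so there is nothing to compare beyond noting that you have supplied the proof the paper omits.
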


\section{Resolving the twistor space}\label{resolution}

To resolve the singularities of $Z/\Gamma$, we will throw away a neighbourhood of the singular locus, leaving a manifold with corners. The boundary of this manifold with corners will be foliated by tori of different dimensions; collapsing these tori will give a smooth manifold with a symplectic structure---the blow up of $Z/\Gamma$ along the singular locus. The key to finding the manifold with corners is the fact that the singularities are abelian, which means they can locally be described using toric geometry and, moreover, that these local pictures are mutually compatible in a precise sense.

We begin in \S\ref{toric_model} by explaining the local toric models for the singularities and their resolutions. In \S\ref{cutting_thm} we consider what it means in general for local toric models to be ``mutually compatible''. We explain how, when this compatibility occurs, collapsing tori on the boundary of a certain manifold with corners yields a symplectic manifold. This is a ``semi-local'' version of the standard Delzant construction, which gives a symplectic toric manifold from a Delzant polytope \cite{delzant}. Finally  in \S\ref{doing_the_blowup} we apply this to the orbifolds $Z/\Gamma$ arising from Theorem \ref{mainhyper} to blow up the singular locus and produce the sought-after resolution.

\subsection{Local toric models for the resolution}\label{toric_model}

We assume the reader is familiar with toric geometry and, in particular, the symplectic approach based on moment polytopes and the work of Delzant \cite{delzant}.

\subsubsection*{Model resolution near a $\Z_2$-point}

Lemma \ref{local_model_orbipoints} gives that the singularity at a $\Z_2$-orbifold point is modeled on the quotient of $\C^3$ by $\Z_2$ acting as $(z_1,z_2,z_3) \mapsto (-z_1,-z_2,z_3)$. In other words, the model singularity is $A_1 \times \C$ and this has the crepant resolution $\O(-2) \times \C$. We can describe this from the symplectic toric point of view as follows. There is a $T^2$-action on $A_1$ coming from the $T^2 \subset \SU(2)$ action on $\C^2$; together with the $S^1$-action on $\C$ this gives a $T^3$-action on $A_1 \times \C$. The moment polytope of this action is given by the convex hull in $\R^3$ of the rays through the points $(1,0,0)$, $(1,2,0)$ and $(0,0,1)$ or,
equivalently, the inequalities
\[
2x - y \geq 0,
\quad
y \geq 0,
\quad
z\geq 0.
\]
The ray $x=0=y$ corresponds to the singular locus. In the resolution we  remove this ray by adding the inequality $x \geq 1$. One readily checks the new polytope is Delzant and corresponds to the blow-up of the singular locus in $\C^3/\Z_2$. (Strictly speaking, of course, the polytope gives the resolution together with a particular choice of symplectic structure corresponding to the chosen size of the exceptional locus; we habitually omit to mention this choice of scale, calling the result ``the'' blow-up.)

\subsubsection*{Model resolution near a $\Z_2\oplus \Z_2$-point}

It follows from Lemma \ref{local_model_orbipoints} that the singularity at a $\Z_2\oplus \Z_2$ point is locally given by the quotient of $\C^3$, with its Euclidean symplectic structure, by the action (\ref{z2z2act}) of $\Z_2 \oplus \Z_2$. This action commutes with the standard $T^3$-action on $\C^3$ and so the result is again a toric orbifold. It has moment-polytope $P$ which is the convex hull of the rays through $(1,1,0)$, $(1,0,1)$ and $(0,1,1)$ or, equivalently, the region of $\R^3$ given by the inequalities
\[
x + y - z \geq 0, \quad x - y + z \geq 0, \quad -x + y + z \geq 0.
\]
The origin gives the point with orbifold group $\Z_2\oplus \Z_2$ whilst the three rays correspond to those points with orbifold group $\Z_2$.

To resolve we blow up the singular locus. Symplectically this corresponds to cutting the polytope along hyperplanes in order to make it Delzant. There are different choices possible here; the resolution we focus on is the most symmetric, where we impose the additional inequalities $x\geq1$, $y\geq1$ and $z\geq1$. This gives a polytope $R$ with four vertices, at $(1,1,1)$, $(1,1,2)$, $(1,2,1)$ and $(2,1,1)$. The normals at $(1,1,1)$ are just the standard coordinate vectors whilst at, say, $(1,1,2)$ the normals are $(1,0,0)$, $(0,1,0)$ and $(1, 1, -1)$; at each vertex then the normals form a basis for the lattice $\Z^3$ and so $R$ is Delzant.

\begin{center}

\hspace{1.5\baselineskip}

\includegraphics[width=8cm]{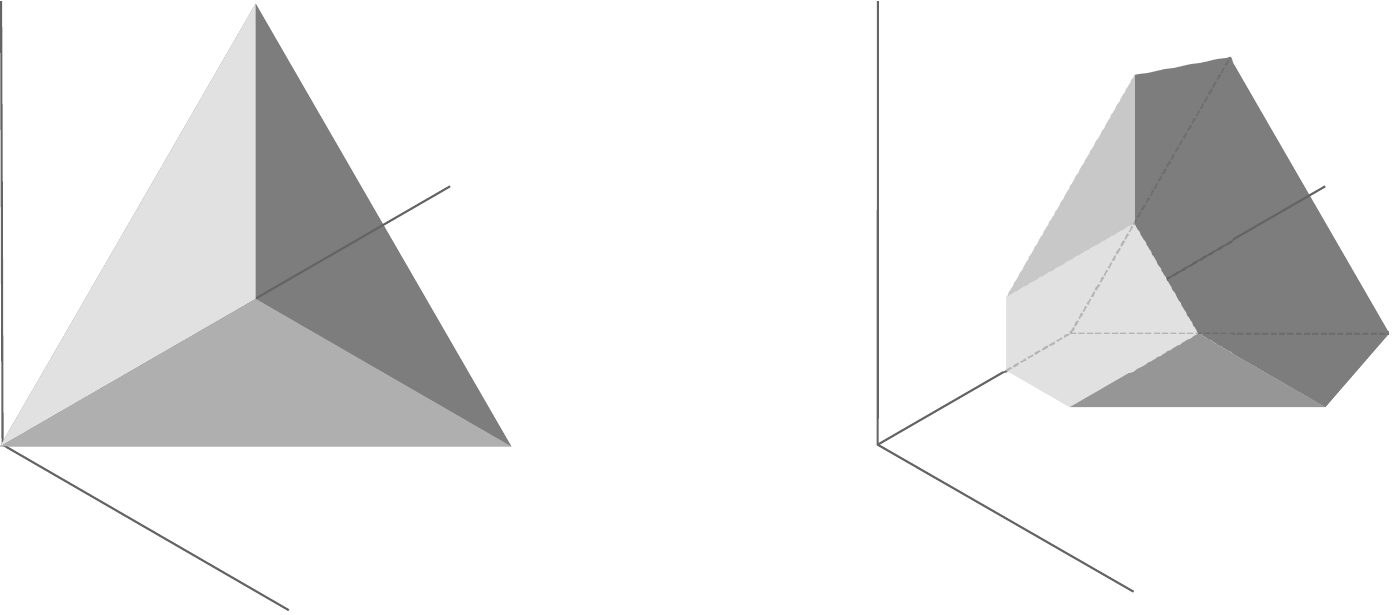}

{\itshape The left-hand polytope $P$ is the singularity $\C^3/\Z_2^2$, the right-hand polytope $R$ is the resolution given by blowing up the singular locus.}
\end{center}

Each ray in $P$ has been replaced by a 2-face in $R$, giving three isomorphic toric divisors $E_1,E_2, E_3$ in the resolution, each isomorphic to the blow-up of $\C\P^1 \times \C$. (For example, the 2-face in the plane $x=1$ has polytope given by the inequalities, $y,z \geq 1$ and $-1\leq y- z \leq 1$.) Each divisor $E_i$ contains two $-1$-curves, of equal area, and each intersection $E_i \cap E_j$ occurs along one of these curves. There are three such curves $C_1, C_2, C_3$ in all, meeting in a single point corresponding to the vertex $(1,1,1)$. Each curve $C_i$ has normal bundle $\O(-1)\oplus \O(-1)$.

Both resolutions described here are produced by blowing up the singular locus. In these toric model situations, the symplectic structure on the blow-up is clear. In the non-toric case, however, there is no result of sufficient generality available in the literature which immediately gives the existence of a symplectic blow-up of $Z/\Gamma$ along its singular locus (at least to the best of our knowledge). To proceed then, we first describe a semi-local version of the Delzant construction which will enable us to symplectically blow up orbifold singularities which are ``semi-locally toric'' in nature. We then apply this machinery to blow-up $Z/\Gamma$.

\subsection{Symplectic cuts and semi-local torus actions}
\label{cutting_thm}

To construct the blow-up of $Z/\Gamma$ we will use the language of symplectic cutting, introduced by Lerman \cite{lerman}; see also the article \cite{lerman-woodward} which describes the version of cutting involving several commuting Hamiltonians which we will use. We briefly recall the definition here.

\begin{definition}\label{mulitcut}
Let $M$ be a symplectic manifold with a Hamiltonian $T^k$-action and let $h_1, \ldots , h_k$ be Hamiltonian functions corresponding to an integral basis of $\Lie(T^k)$. We consider the product $T^k$-action on $M \times \C^k$ which has moment map $\mu \colon M \times \C^k \to \R^k$
\[
\mu(p,z) = \left(h_1(p) - |z_1|^2, \ldots , h_k(p) - |z_k|^2 \right).
\]
Suppose that $c = (c_1,\ldots, c_k)$ is a regular value of $h = (h_1, \ldots, h_k) $ and that the copy of $S^1 \subset T^k$ generated by $h_j$ acts freely on the level set $h_j=c_j$. Then the \emph{symplectic cut of $M$ by $h$ at level $c$} is the symplectic manifold $M_{h,c} = \mu^{-1}(c)/T^k$, i.e., the reduction of $M \times \C^k$ at $\mu = c$. (The fact that $c$ is a regular value of $\mu$ and that $T^k$ acts freely on $\mu^{-1}(c)$ are ensured by the analogous conditions for the original torus action.)
\end{definition}

The case $k=1$ is perhaps more familiar. There, the open set $\{ h > c\}$ symplectically embeds as a dense open subset of $M_{h,c}$; topologically one can regard $M_{h,c}$ as the quotient obtained from the manifold-with-boundary $\{h \geq c\}$ by collapsing the $S^1$-orbits in the boundary. When there are more Hamiltonians we have a similar picture. Again $\{ h_i > c_i : i = 1, \ldots, k\}$ is a symplectically embedded dense open set $M'_{h,c} \subset M_{h,c}$. The boundary of $\overline{M}'_{h,c}$, made up of the hypersurfaces $h_i = c_i$, is stratified with the codimension $m$ stratum coming from the intersection of~$m$ bounding hypersurfaces. Each component of the codimension $m$ stratum carries a $T^m$-action, generated by the $m$ Hamiltonians whose level-sets meet there. To visualize the cut manifold $M_{h,c}$ topologically, collapse the orbits of these torus actions on the various strata of the boundary of $\overline{M}_{h,c}'$.

Our use of symplectic cutting hinges on the following simple lemma.

\begin{lemma}\label{compatibility}
Let $M$ be a symplectic manifold with a Hamiltonian action of $T^k$ generated by commuting Hamiltonians $h_1, \ldots, h_k$, let $c
\in \R^k$ be a regular value of $h= (h_1, \ldots, h_k)$ and suppose that $T^k$ acts freely on the level set $h=c$. Suppose, moreover, that for some $r<k$ the first $r$ Hamiltonians satisfy $h_i > c_i$ at all points of $M$. Then the cut of $M$ by $h$ at $c$ is symplectomorphic to the cut of $M$ by $(h_{r+1}, \ldots, h_k)$ at $(c_{r+1}, \ldots, c_k)$.
\end{lemma}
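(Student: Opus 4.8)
The plan is to reduce the $k$-fold symplectic cut to the $(k-r)$-fold cut by observing that the first $r$ Hamiltonians, being everywhere strictly greater than their cut levels, contribute nothing: the extra $\C^r$ factors they introduce are, after symplectic reduction, entirely resolved away. More precisely, recall that $M_{h,c}$ is by definition the reduction of $M \times \C^k$ at $\mu = c$ for the product $T^k$-action, where the $j$-th component of the moment map is $h_j - |z_j|^2$. Writing $\C^k = \C^r \times \C^{k-r}$ and $T^k = T^r \times T^{k-r}$, the plan is to carry out the reduction in two stages: first reduce $M \times \C^r \times \C^{k-r}$ by the $T^r$-factor at the level $(c_1, \ldots, c_r)$, and then reduce the result by $T^{k-r}$ at $(c_{r+1}, \ldots, c_k)$. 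Reduction in stages is standard, so the only real content is identifying the intermediate space.

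First I would analyse the $T^r$-reduction. On the level set $h_i - |z_i|^2 = c_i$ for $i = 1, \ldots, r$, the hypothesis $h_i > c_i$ on all of $M$ forces $|z_i|^2 = h_i - c_i > 0$, so none of the first $r$ complex coordinates vanishes on $\mu^{-1}$ of the relevant level. Hence the $T^r$-action on these coordinates is free, the level set is a smooth $T^r$-bundle, and the quotient map $(p, z_1, \ldots, z_r) \mapsto p$ identifies the reduced space $(M \times \C^r)//T^r$ with $M$ itself. The symplectic form one gets is $\omega_M$ (the $\C^r$ contributions vanish on the horizontal distribution, by the usual Marsden--Weinstein computation since $|z_i|$ is constant along the level set modulo the torus), and crucially the residual $T^{k-r}$-action on this quotient is exactly the original $T^{k-r}$-action on $M$ with moment map $(h_{r+1}, \ldots, h_k)$ — these Hamiltonians are invariant under the $T^r$-factor and descend unchanged. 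So after the first stage we are reducing $M \times \C^{k-r}$ by $T^{k-r}$ at level $(c_{r+1}, \ldots, c_k)$, which is precisely the definition of the cut of $M$ by $(h_{r+1}, \ldots, h_k)$ at $(c_{r+1}, \ldots, c_k)$. To invoke that cut one needs $(c_{r+1}, \ldots, c_k)$ to be a regular value of $(h_{r+1}, \ldots, h_k)$ and $T^{k-r}$ to act freely on its level set; both follow from the corresponding hypotheses on the full $h$ and $T^k$, since adding the free first $r$ factors cannot destroy regularity or freeness.

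The main obstacle I anticipate is bookkeeping the reduction-in-stages carefully enough to be sure the symplectic structures match, rather than merely the underlying smooth manifolds: one must check that the two-form produced by the staged reduction agrees with the one produced by the single reduction, which is a routine but slightly fiddly application of the commuting-reduction theorem (see, e.g., Marsden--Weinstein reduction in stages). A secondary point to handle with care is verifying that regularity of $c$ for $h$ and freeness of the $T^k$-action on $\{h = c\}$ genuinely pass to the sub-collection $(h_{r+1}, \ldots, h_k)$; this is where the strict inequality $h_i > c_i$ for $i \le r$ is used, since it guarantees the first $r$ defining equations of the level set are independent of the rest along $\mu^{-1}(c)$ and cut out a submanifold on which the first $T^r$-factor already acts freely, so nothing is lost on the remaining factors. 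I would close by remarking that the lemma is exactly the statement that one may discard a cutting Hamiltonian whose level is never attained, which is what makes the ``semi-local Delzant construction'' of the next section work: in practice only finitely many of the local torus Hamiltonians will be active near any given stratum.
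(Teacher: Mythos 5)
Your argument is correct and is essentially the paper's own proof: the authors realize your first-stage reduction explicitly by the map $\phi(p,w)=\bigl(p,\sqrt{h_1(p)-c_1},\ldots,\sqrt{h_r(p)-c_r},w\bigr)$, which is exactly the global section you use (via the everywhere-positive square roots guaranteed by $h_i>c_i$) to identify the $T^r$-quotient with $M$, and they then verify $\phi^*\omega_{\hat{X}}=\omega_{\hat{Y}}$ directly instead of citing reduction in stages. No gaps.
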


\begin{proof}
Write $X$ for the cut of $M$ by $h$ at $c$ and $Y$ for the cut of $M$ by $(h_{r+1}, \ldots , h_k)$ at $(c_{r+1}, \ldots, c_k)$. By definition, $Y$ is the quotient of the following subset of $M \times \C^{k-r}$ by $T^{k-r}$:
\[
\hat{Y} = \{ (p, w_{r+1}, \ldots, w_k)  : h_i(p) = |w_i|^2 + c_i,\ i=r+1,\ldots, k \}
\]
We write $\hat{X} \subset M \times \C^k$ for the analogous set whose quotient is $X$. There is a natural $T^{k-r}$-equivariant map $\phi \colon \hat{Y} \to \hat{X}$ given by
\[
\phi(p, w_{r+1}, \ldots, w_k)
=
\left(
p, \sqrt{h_1(p) - c_1}, \ldots, \sqrt{h_r(p) - c_r}, w_{r+1}, \ldots, w_k\right)
\]
The map $\phi$ is well-defined and smooth precisely because of the assumption that $h_i > c_i$ for all $i=1, \ldots , r$. Moreover, if $(p,w) \in \hat{X}$ then there is a unique element of $T^r$ which moves $(p,w)$ to lie in the image of $\phi$.  In other words, $\phi$ embeds $\hat{Y}$ as a slice in $\hat{X}$ for the $T^r$-action. It follows that $\phi$ induces a diffeomorphism between the quotients $\hat{X}/T^k =X$ and $\hat{Y}/T^{r-k} = Y$.

Both $\hat{X}$ and $\hat{Y}$ carry closed 2-forms $\omega_{\hat{X}}$ and $\omega_{\hat{Y}}$, by restriction from the ambient spaces $M \times \C^k$ and $M \times \C^{k-r}$. These 2-forms have kernel exactly along the torus orbits and descend to define the symplectic structures on the quotients $X$ and $Y$. To see then that $\phi$ induces a symplectomorphism between $X$ and $Y$, it suffices to observe that $\phi^*(\omega_{\hat{X}} ) = \omega_{\hat{Y}}$.
\end{proof}

This lemma enables us to extend cutting to manifolds with local torus actions, where the dimensions of the tori may vary. To explain how this goes we begin by rephrasing a familiar example in this language, namely that of recovering a genuine toric manifold from a Delzant polytope $P \subset \R^n$ \cite{delzant}. We begin with $T^n \times \R^n$ with the obvious symplectic form which makes the projection onto either factor a Lagrangian fibration. Let $U \subset \R^n$ be an open set which contains a single vertex $v$ of $P$. There are $n$ linear functions defined up to scale on $U$ which vanish on the $n$ facets of $P$ meeting at $v$ and which are positive on the interior of $P$. The pull-back of these linear functions to $T^n \times U$ define independent Hamiltonian vector fields parallel to the $T^n$-fibres and, because the vertex is Delzant, we can scale the Hamiltonians so that they are an integral basis for the obvious $T^n$ action on $T^n \times \R^n$. Next suppose that $U \subset \R^n$ is an open set which meets a single one-dimensional edge. Then on $T^n \times U$ we have $n-1$-Hamiltonians generating a $T^{n-1}$-action, corresponding to the $n-1$ facets of $P$ which meet along this edge. Similarly if $U \subset \R^n$ is an open set meeting a single dimension $k$ face then we have $n-k$ Hamiltonians coming from the $n-k$ facets meeting there.

Now we cover the whole of $P$ in such sets and take the cuts of each $T^n\times U$ by the corresponding Hamiltonians at level $0$. In each case the cut of $T^n \times U$ yields a symplectic manifold projecting to $U \cap P$. When two such open sets $U, V$ intersect, the Hamiltonians of one, $U$ say, are necessarily a subset of those of $V$. Moreover, the additional Hamiltonians coming from $V$ are strictly positive on $U \cap V$. So  Lemma \ref{compatibility} allows us to glue the cuts of $T^n \times U$ and $T^n \times V$ together to obtain a symplectic manifold projecting to $(U \cup V) \cap P$. Doing this with every open set in the cover of $P$ and gluing according to the intersections we obtain a symplectic manifold $X$ together with a map $X \to P$. Note that for each open set $U$ the torus action on $T^n \times U$ defines a torus action on the cut manifold, whose moment map is the projection to $U \cap P$. These torus actions fit together under the gluings to give a torus action on $X$ with moment map the projection to $P$. The manifold $X$ is, of course, the toric symplectic manifold associated to the polytope by Delzant.

This description is not so far from Delzant's original construction, which involved a global symplectic reduction. But by placing the emphasis on the local behvaiour and the gluings we are able to generalize the construction to deal with things which are only ``semi-locally'' toric. This is the motivation behind the following definition.

\begin{definition}\label{local_cutting_data}
Let $M$ be a symplectic manifold.  \emph{Semilocal cutting data on $M$} is the following:
\begin{enumerate}
\item
An open cover $\{U_\alpha\}$ of $M$.
\item
For each $\alpha$, a collection of Hamiltonians $h_{\alpha,1}, \ldots , h_{\alpha, k_\alpha} \colon U_\alpha \to \R$ generating a $T^{k_\alpha}$-action on $U_\alpha$ (where we allow $k_\alpha = 0$). We require the Hamiltonians to form an integral basis for the torus action.
\item
For each $\alpha$ an element $c_\alpha \in \R^{k_\alpha}$ which is a regular value of $h_\alpha$. We assume, moreover, that the action of $S^1 \subset T^{k_\alpha}$ generated by $h_{\alpha, j}$ acts freely on $h_{\alpha, j}^{-1}(c_{\alpha, j})$.
\end{enumerate}
The data must satisfy the following compatibility conditions:
\begin{enumerate}
\item
If $U_\alpha \cap U_\beta \neq \varnothing$ then it is both $T^{k_\alpha}$- and $T^{k_\beta}$-invariant.
\item
If $U_\alpha \cap U_\beta \neq \varnothing$ and $k_\alpha \geq k_\beta$ then there is a way to reorder the Hamiltionians on $U_\alpha$ and $U_\beta$ so that on the intersection $h_{\beta,i} = h_{\alpha,i}$  for $i = 1,\ldots , k_\beta$.
\item
If $U_\alpha \cap U_\beta \neq \varnothing$ then, with the above reordering, we require that $c_{\beta,i} = c_{\alpha,i}$ for $i=1, \ldots , k_\beta$.
\item
If $U_\alpha \cap U_\beta \neq \varnothing$ then, with the above reordering, we ask that $h_{\alpha,i} > c_{\alpha,i}$ for $i = k_\beta + 1, \ldots , k_\alpha$.
\end{enumerate}
\end{definition}

Given symplectic cutting data, the subsets
\[
\{
(h_{\alpha, 1}, \ldots, h_{\alpha, k_\alpha})
\geq
(c_{\alpha,1}, \ldots , c_{\alpha, k_\alpha})
\}
\subset U_\alpha
\]
fit together to give a closed subset $\overline{M}' \subset M$ whose boundary is stratified by torus orbits, just as in the case of cutting with a globally defined collection of Hamiltonians. Again, just as in the global situation, collapsing these torus orbits produces a topological manifold $\widetilde{M}$. Now each of the cut pieces carries a natural symplectic structure and Lemma \ref{compatibility} tells us that these match up on the overlaps. This proves:

\begin{theorem}[Semi-local Delzant construction]
\label{local_torus_cuts}
Let $M$ be a symplectic manifold, $\{(U_\alpha, h_\alpha, c_\alpha)\}$ semi-local symplectic cutting data on $M$ and $\widetilde{M}$ the topological manifold produced by collapsing the torus orbits on the boundary of $\overline{M}' \subset M$. Then there is a symplectic structure on $\widetilde{M}$ which agrees on the image of each $U_\alpha \cap \overline{M}'$ with the cut of $U_\alpha$ by $h_\alpha$ at $c_\alpha$.
\end{theorem}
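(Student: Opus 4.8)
The plan is to build the symplectic structure on $\widetilde{M}$ chart by chart from the local symplectic cuts, gluing adjacent charts by means of Lemma \ref{compatibility}, in exact parallel with the reconstruction of a Delzant toric manifold sketched above. First I would form, for each $\alpha$, the symplectic cut $X_\alpha:=(U_\alpha)_{h_\alpha,c_\alpha}$ of Definition \ref{mulitcut}; this is a smooth symplectic manifold since, by Definition \ref{local_cutting_data}, $c_\alpha$ is a regular value of $h_\alpha$ and the circle generated by each $h_{\alpha,j}$ acts freely on $h_{\alpha,j}^{-1}(c_{\alpha,j})$. I would then record the standard structure of such a cut: there is a continuous surjection $q_\alpha\colon \{h_\alpha\geq c_\alpha\}=U_\alpha\cap\overline{M}'\to X_\alpha$, $p\mapsto[(p,\sqrt{h_{\alpha,1}(p)-c_{\alpha,1}},\dots,\sqrt{h_{\alpha,k_\alpha}(p)-c_{\alpha,k_\alpha}})]$, which restricts to a symplectomorphism onto its dense open image over $\{h_\alpha>c_\alpha\}$ and which, over the stratum where exactly the functions $h_{\alpha,i}-c_{\alpha,i}$ with index in a subset $S$ vanish, collapses the orbits of the subtorus $T^S\subset T^{k_\alpha}$ generated by the corresponding Hamiltonians. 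In particular $X_\alpha$ is, topologically, $U_\alpha\cap\overline{M}'$ with these boundary torus-orbits collapsed.

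Next I would treat the overlaps. Fix $\alpha,\beta$ with $U_{\alpha\beta}:=U_\alpha\cap U_\beta\neq\varnothing$ and, say, $k_\alpha\geq k_\beta$; after the reorderings of Definition \ref{local_cutting_data} one has $h_{\alpha,i}=h_{\beta,i}$ and $c_{\alpha,i}=c_{\beta,i}$ for $i\leq k_\beta$, while $h_{\alpha,i}>c_{\alpha,i}$ on $U_{\alpha\beta}$ for $i>k_\beta$. A first, purely combinatorial, observation is that these conditions force $\{h_\alpha>c_\alpha\}\cap U_{\alpha\beta}=\{h_\beta>c_\beta\}\cap U_{\alpha\beta}$ and $\{h_\alpha\geq c_\alpha\}\cap U_{\alpha\beta}=\{h_\beta\geq c_\beta\}\cap U_{\alpha\beta}$ (the latter is exactly what lets the pieces $\{h_\gamma\geq c_\gamma\}$ patch into a well-defined $\overline{M}'$). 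Since $U_{\alpha\beta}$ is $T^{k_\alpha}$-invariant I may cut it by $h_\alpha$ at $c_\alpha$; moving the strictly positive Hamiltonians $h_{\alpha,k_\beta+1},\dots,h_{\alpha,k_\alpha}$ to the front and applying Lemma \ref{compatibility}, this cut is symplectomorphic to the cut of $U_{\alpha\beta}$ by $(h_{\beta,1},\dots,h_{\beta,k_\beta})$ at $(c_{\beta,1},\dots,c_{\beta,k_\beta})$. As cutting is local over the base, the former is the open subset of $X_\alpha$ lying over $U_{\alpha\beta}$ and the latter the corresponding open subset of $X_\beta$; I call the resulting symplectomorphism $\psi_{\beta\alpha}$. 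Inspecting the explicit map $\phi$ in the proof of Lemma \ref{compatibility}, $\psi_{\beta\alpha}$ carries $q_\alpha(p)$ to $q_\beta(p)$ for every $p$ in the common dense open set $\{h_\alpha>c_\alpha\}\cap U_{\alpha\beta}$.

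Finally I would assemble the pieces. Since $\widetilde{M}$ is given as a topological manifold, it is enough to exhibit an open cover of it by sets carrying symplectic cuts, with structures agreeing on overlaps. The collapsing quotient $q\colon\overline{M}'\to\widetilde{M}$ restricts over each $U_\alpha\cap\overline{M}'$ to a map collapsing precisely the subtori $T^S\subset T^{k_\alpha}$ as in $X_\alpha$: compatibility condition (4) guarantees that a boundary stratum of $\overline{M}'$ meeting $U_\alpha$ is cut out only by some of the $h_{\alpha,i}-c_{\alpha,i}$, the rest being strictly positive there, so $q$ identifies $q(U_\alpha\cap\overline{M}')$ homeomorphically with $X_\alpha$ and I transport the symplectic form of $X_\alpha$ to this open set. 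On an overlap the transition between the two transported forms is realized by $\psi_{\beta\alpha}$, which by the previous paragraph agrees with the canonical identification on a dense open set; being continuous with values in the Hausdorff space $\widetilde{M}$, it must equal that identification, so the two symplectic forms coincide there. This yields a symplectic form on $\widetilde{M}$ which over each $U_\alpha\cap\overline{M}'$ agrees with the cut of $U_\alpha$ by $h_\alpha$ at $c_\alpha$, as required.

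The argument is essentially formal once Lemma \ref{compatibility} is available; the only point needing genuine care --- the main obstacle, such as it is --- is the bookkeeping that makes the gluing canonical: verifying that the symplectomorphisms furnished by Lemma \ref{compatibility} agree with the obvious identification on the overlapping dense sets (which is what makes consistency automatic) and that the combinatorics of which torus is collapsed over which stratum is globally coherent. Both rest on the positivity condition (4) of Definition \ref{local_cutting_data}, which encodes precisely the fact that cutting in an already strictly positive direction changes nothing.
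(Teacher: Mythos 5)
Your proposal is correct and follows exactly the paper's route: the paper's own justification is the single paragraph preceding the theorem, which forms the local cuts and invokes Lemma \ref{compatibility} to match them on overlaps. You have simply filled in the details (the explicit symplectomorphisms $\psi_{\beta\alpha}$, their agreement with the canonical identification on the dense open sets, and the cocycle coherence), all of which the paper leaves implicit.
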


\subsection{Blowing up the singular locus in the twistor space}\label{doing_the_blowup}

We return now to the case of a symlpectic orbifold $Z/\Gamma$ arising as the twistor space of a hyperbolic orbifold from Theorem~\ref{mainhyper}.  We will find local Hamiltonians on $Z/\Gamma$ in order that Theorem~\ref{local_torus_cuts} can be applied to cut away a complement of the orbifold points. We will arrange things so that, for example, near a point with orbifold group $\Z_2 \oplus \Z_2$ this symplectic cutting carries out the blow-up resolution described in the local toric model of \S\ref{toric_model}.

We begin by fixing disjoint neighborhoods $U_p$ of each $\Z_2 \oplus \Z_2$-point $p$ where the singularity is modeled on the quotient of $\C^3$ by the action (\ref{z2z2act}) of $\Z_2 \oplus \Z_2$. Here we certainly have a Hamiltonian $T^3$-action, which is described in \S\ref{toric_model}. Writing $x,y,z$ for the coordinate Hamiltonians, the blow-up of the singular locus in $U_p$ is given by the symplectic cut by of $U_p$ by $(x,y,z)$ at $(\epsilon, \epsilon, \epsilon)$ for $\epsilon >0$ sufficiently small.

Next we consider an irreducible component $\Sigma$ of the singular locus which has a certain number $p_1, \ldots , p_m$ of $\Z_2\oplus\Z_2$-points, with the remainder of the points being $\Z_2$-points. In each $U_{p_i}$ we have a distinguished Hamiltonian $S^1$-action which fixes $\Sigma$ and rotates the normal directions with both weights equal to $1$. If $\Sigma$ corresponds to the ray $\{x=0,\ y = z\}$  in the local toric model, then the Hamiltonian of this distinguished $S^1$-action is $x$. We want to extend the circle actions on each $U_{p_i}$ to a Hamiltonian $S^1$-action defined on a neighborhood of the whole of $\Sigma$.  To this end we begin with the following version of Weinstein's symplectic neighborhood theorem.

\begin{theorem}[Weinstein \cite{weinstein}]
\label{relative_local_neighbourhood}
Let $(M, \omega)$ and $(M', \omega')$ be two symplectic orbifolds and let $N\subset M$ and $N' \subset M'$ be symplectic sub-orbifolds with normal orbifold-bundles $E$ and $E'$ respectively.
\begin{enumerate}
\item
Let $D \colon E \to E'$ be an isomorphism of symplectic orbifold-bundles covering a symplectomorphism $\phi_0 \colon N \to N'$ downstairs. Then there exist neighborhoods $W$ and $W'$ of $N$ and $N'$ and a symplectomoprhism $\phi \colon W\to W'$ extending $\phi_0$ such that $D\phi = D$ on $E$.
\item
Suppose in addition that there are points $p_1, \ldots , p_m \in N$ and locally defined symplectomorphisms $\psi_1, \ldots , \psi_m$ with $\psi_i$ defined near $p_i$, taking values in $M'$ and with $\psi_i|_N = \phi_0$. Suppose, moreover, that on the normal bundle of $N$, we have $D = D\psi_j$. Then we can arrange that for each $i$, there is a neighborhood of $p_i$ (possibly smaller than the domain of $\psi_i$) on which $\phi = \psi_i$.
\end{enumerate}
\end{theorem}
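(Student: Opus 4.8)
The plan is to run the Moser deformation underlying Weinstein's theorem, arranged so that the resulting isotopy is the \emph{identity} near each $p_i$; part (1) is simply the case $m=0$. Since $M$ and $M'$ are orbifolds, every construction below (tubular neighbourhoods, the relative Poincar\'e lemma, cutoff functions, Moser flows) is carried out equivariantly on uniformising charts using invariant auxiliary data — invariant connections, averaged partitions of unity — and therefore descends to the orbifolds; I suppress this from the notation and freely shrink neighbourhoods of $N$ without comment.

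First I would shrink the domains of the $\psi_i$ so that they lie in pairwise disjoint neighbourhoods $V_i$ of the $p_i$. Using a tubular neighbourhood of $N$ one builds a diffeomorphism $F\colon W\to W'$ of neighbourhoods of $N$ and $N'$ with $F|_N=\phi_0$ and inducing $D$ on the normal bundle; by the standard pointwise linear-algebra refinement one can also arrange that $F^*\omega'$ and $\omega$ agree on $TM|_N$, and — crucially — that the $1$-jet of $F$ along $N\cap V_i$ coincides with that of $\psi_i$. This last requirement is consistent, since the $1$-jet of $\psi_i$ along $N\cap V_i$ lies in the same affine class: $\psi_i|_N=\phi_0$, $\psi_i$ induces $D$ on the normal bundle, and, being a symplectomorphism, $\psi_i^*\omega'$ agrees with $\omega$ on $TM|_N$. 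Because $F$ and $\psi_i$ then agree to first order along $N\cap V_i$, one can interpolate between them — equal to $\psi_i$ near $p_i$ and to $F$ away from $V_i$ — keeping that $1$-jet fixed, so as to obtain a new $F$ with all the preceding properties which in addition equals $\psi_i$ on a smaller neighbourhood $V_i'\ni p_i$. As the $V_i$ are disjoint, these local modifications are independent.

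Next, put $\beta=F^*\omega'-\omega$. It is closed, vanishes on $TM|_N$, and vanishes identically on each $V_i'$ (where $F=\psi_i$ is a symplectomorphism). The relative Poincar\'e lemma, whose homotopy operator is built from fibrewise scaling towards $N$ in a tubular neighbourhood, then yields $\alpha$ with $d\alpha=\beta$ and $\alpha$ vanishing to second order along $N$; moreover, since $\beta\equiv 0$ on the full neighbourhood $V_i'$ of $p_i\in N$ and the scaling carries a small neighbourhood $V_i''\ni p_i$ into $V_i'$, this $\alpha$ also vanishes on $V_i''$. Now set $\omega_t=\omega+t\beta$ (symplectic near $N$ for $t\in[0,1]$ after shrinking), let $X_t$ be the time-dependent vector field with $\iota_{X_t}\omega_t=-\alpha$, and let $\rho_t$ be its flow. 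Since $\alpha$ vanishes to second order along $N$, $X_t$ vanishes to first order there, so $\rho_t$ fixes $N$ and $d\rho_t=\mathrm{Id}$ on $TM|_N$; since $\alpha\equiv 0$ on $V_i''$, also $X_t\equiv 0$ there and $\rho_t=\mathrm{Id}$ near $p_i$. The usual computation gives $\tfrac{d}{dt}\rho_t^*\omega_t=\rho_t^*\bigl(d\iota_{X_t}\omega_t+\beta\bigr)=0$, hence $(F\circ\rho_1)^*\omega'=\rho_1^*\omega_1=\omega$. Therefore $\phi:=F\circ\rho_1$ extends $\phi_0$, induces $D$ on the normal bundle, is a symplectomorphism, and equals $\psi_i$ on a neighbourhood of each $p_i$, proving both parts.

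The main obstacle is the bookkeeping in the second paragraph: one must choose $F$ carefully enough — both its $1$-jet along $N$ near the $p_i$ and the subsequent localized interpolations — that forcing agreement with the prescribed $\psi_i$ on an honest neighbourhood of $p_i$, rather than merely along $N$, does not disturb the global requirements that $F$ extend $\phi_0$, induce $D$ on the normal bundle, and match $\omega$ on $TM|_N$. Disjointness of the $V_i$ together with working at the level of germs and jets makes these modifications invisible to one another and to the rest of $N$, after which the Poincar\'e lemma and Moser steps are routine; the equivariance required for the orbifold version is likewise routine.
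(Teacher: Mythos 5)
Your proposal is correct and follows the same route the paper intends: the paper simply invokes Weinstein's theorem for part (1) and notes that part (2) follows by ``paying attention during this proof to what happens near the $p_j$'', which is precisely what you do by arranging the tubular-neighbourhood map $F$, the primitive $\alpha$ from the relative Poincar\'e lemma, and hence the Moser flow to be trivial near each $p_i$. The only steps worth writing out more carefully are the interpolation between $F$ and $\psi_i$ (justified because their difference vanishes to second order along $N$, so the cut-off interpolation remains a diffeomorphism after shrinking) and the equivariance needed for the orbifold setting, both of which are routine as you say.
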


Part 1 is precisely Weinstein's Theorem as it is usually stated. (In fact it is more normally given for manifolds, but the proof applies equally to orbifolds.) Simply paying attention during this proof to what happens near the $p_j$ shows that part 2 holds as well.

\begin{lemma}\label{extending_circle_actions}
Let $\Sigma \subset Z/\Gamma$ be an irreducible component of the singular locus with $p_1, \ldots, p_m \in \Sigma$ the $\Z_2\oplus \Z_2$-points. Then there are neighbourhoods $V_{p_i} \subset U_{p_i}$ of each $p_i$ and a Hamiltonian $S^1$-action on a neighborhood of $\Sigma$ which fixes $\Sigma$ and such that over $V_{p_i}$ the action agrees with the distinguished action defined by the local toric model at $p_i$.
\end{lemma}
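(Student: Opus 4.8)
The plan is to build the desired $S^1$-action on a model symplectic manifold $M'$ containing $\Sigma$ as a symplectic sub-orbifold, and then transport it to a neighbourhood of $\Sigma$ inside $Z/\Gamma$ using the relative Weinstein theorem (Theorem~\ref{relative_local_neighbourhood}), taking care that the transport is compatible with the local toric charts near each $p_i$. Concretely, $\Sigma$ is a (possibly orbifold) $2$-sphere -- one of the twistor lifts of a coordinate plane, glued up across the $\Z_2\oplus\Z_2$-points -- with normal orbifold-bundle $E \to \Sigma$ of rank $2$, and the distinguished $S^1$ at each $p_i$ rotates the normal directions with equal weights $1$. So the first step is to exhibit a global Hamiltonian $S^1$-action on (a neighbourhood of the zero section in) the symplectic orbifold-bundle $E$, fixing $\Sigma$, whose linearization at each $p_i$ is the weight-$(1,1)$ rotation appearing in the local toric model of \S\ref{toric_model}. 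This is where the geometry of the fibration enters: $E$ is the normal bundle of a twistor lift, and scalar rotation of the fibres of $E$ is induced by a genuine circle action (for instance the one rotating the twistor $S^2$-fibres about the two points lying over the relevant geodesic plane, restricted near $\Sigma$), which is Hamiltonian for the standard fibrewise-symplectic structure on $E$ since its moment map is (a constant times) the fibrewise norm-squared. I would fix such a model: let $M' = $ a disc-bundle neighbourhood of the zero section of $E$, with the canonical symplectic form agreeing fibrewise with the Euclidean one in the local trivializations coming from the toric charts at the $p_i$, and let the $S^1$ act by scalar rotation of fibres.

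Second step: apply part~(1) of Theorem~\ref{relative_local_neighbourhood} with $N = N' = \Sigma$, $M = Z/\Gamma$, $M' = $ the disc-bundle model just constructed, and $D \colon E \to E$ the identity (the normal bundle of $\Sigma$ in $Z/\Gamma$ is symplectically identified with $E$ by construction). This gives a symplectomorphism $\phi \colon W' \to W$ from a neighbourhood of the zero section in $M'$ onto a neighbourhood of $\Sigma$ in $Z/\Gamma$, fixing $\Sigma$ and with $D\phi = \mathrm{id}$ on $E$. Pushing the model circle action forward by $\phi$ produces a Hamiltonian $S^1$-action on a neighbourhood of $\Sigma$ in $Z/\Gamma$ fixing $\Sigma$, with the correct linear behaviour in the normal directions at each $p_i$.

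Third step -- and this is the main obstacle -- is to arrange that near each $p_i$ this action \emph{actually coincides}, on some possibly smaller neighbourhood $V_{p_i} \subset U_{p_i}$, with the distinguished $S^1 \subset T^3$ from the local toric model, not merely that they agree to first order along $\Sigma$. For this I would invoke part~(2) of Theorem~\ref{relative_local_neighbourhood}: take the $\psi_i \colon U_{p_i} \dashrightarrow M'$ to be the local toric charts identifying $U_{p_i}$ with a neighbourhood of the origin in $\C^3/(\Z_2\oplus\Z_2)$, which we simultaneously realise inside the model $M'$ (here one uses that the toric chart sends the portion of $\Sigma$ in $U_{p_i}$ to the relevant coordinate ray, and sends the fibrewise symplectic/linear data to the standard one, so that $D\psi_i = D = \mathrm{id}$ on the normal bundle of $\Sigma$ -- this compatibility is exactly what was built into the choice of model in Step~1). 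Then part~(2) upgrades $\phi$ so that $\phi = \psi_i$ near $p_i$; since the model circle action restricted to the image of $\psi_i$ is by construction the scalar fibre-rotation, which under the toric chart is precisely the distinguished $S^1$ (Hamiltonian $x$ in the coordinates where $\Sigma = \{x=0,\ y=z\}$), the transported action agrees with the distinguished toric action on $V_{p_i} := \psi_i^{-1}(\text{small ball})$. The delicate point to check is the hypothesis of part~(2), namely that the $\psi_i$ and the model agree on the normal bundle of $\Sigma$; this is a compatibility between the twistor-geometric trivialization of $E$ near $p_i$ and the linear coordinates of the toric model, and verifying it is a direct computation with the weights $(1,1)$ of the $S^1$-action -- both trivializations diagonalize the same circle action with the same weights, hence differ by a bundle automorphism commuting with it, which can be absorbed. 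Once this is in place the lemma follows, with the neighbourhood of $\Sigma$ on which the action is defined being $\phi(W')$ and the $V_{p_i}$ as above.
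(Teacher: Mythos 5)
Your proposal and the paper's proof both hinge on the relative Weinstein theorem (Theorem~\ref{relative_local_neighbourhood}), and in particular on its part~(2) to force exact agreement with the toric models near the $p_i$; the difference is in how the $S^1$-invariant symplectic model is produced. You build an external model: the normal disc bundle of $\Sigma$ with scalar fibre rotation and a symplectic form chosen to match everything. The paper instead works entirely inside $Z/\Gamma$: it first extends the locally defined circle actions from the $U_{p_i}$ to a \emph{smooth but not symplectic} action on a neighbourhood $A$ of $\Sigma$ fixing $\Sigma$ pointwise, then \emph{averages} $\omega$ over this action to get an invariant form $\omega'$. The payoff of averaging is that $\omega'$ automatically coincides with $\omega$ on $\Sigma$ and on the $U_{p_i}$ (where $\omega$ is already invariant), so the hypotheses of Weinstein part~(2) are satisfied with $\psi_i = \mathrm{id}$ and $D = \mathrm{id}$, with no compatibility left to check; Hamiltonianity follows because a neighbourhood of $\Sigma$ retracts to the fixed surface $\Sigma$.

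The one real soft spot in your version is Step~1: you assert the existence of a symplectic form on the disc bundle that simultaneously (i) makes scalar rotation Hamiltonian, (ii) induces the given data on $\Sigma$ and on $E$, and (iii) \emph{equals} (not merely agrees to first order with) the Euclidean form under the toric charts near each $p_i$, which is what you need for the $\psi_i$ to be symplectomorphisms. Conditions (i)--(ii) are standard minimal coupling, but (iii) requires an additional equivariant Moser/Weinstein argument inside the model --- i.e.\ you have pushed the same difficulty into the construction of the model rather than resolved it. This is fixable (one more application of the equivariant Darboux lemma does it), but the paper's extend-then-average device dissolves the issue entirely, and I would recommend adopting it. A minor further caveat: $\Sigma$ need not be a $2$-sphere (the components of the singular locus can have arbitrary genus, cf.\ Theorem~\ref{b3}), so your "for instance" description of the circle action via rotating twistor fibres should not be leaned on; what is actually needed is just a fibrewise-linear extension of the weight-$(1,1)$ rotations given at the $p_i$, which exists because the space of compatible complex structures on the normal bundle is contractible.
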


\begin{proof}
First we note that, at the cost of shrinking the $U_{p_i}$ slightly, the $S^1$-action defined on each $U_{p_i}$ extends to a smooth action (not necessarily preserving the symplectic form) on a neighborhood $A$ of $\Sigma$, and which fixes $\Sigma$ pointwise.

Now we average the symplectic form $\omega$ under this $S^1$-action to obtain a new symplectic form $\omega'$ on $A$ which is $S^1$-invariant and agrees with $\omega$ on $\Sigma \cup U_{p_1} \cup \cdots \cup U_{p_m}$.
The $S^1$-action is Hamiltonian with respect to $\omega'$, since a neighborhood of $\Sigma$
can be contracted to $\Sigma$ and $\Sigma$ is fixed by $S^1$.

Finally, we use the preceding neighborhood theorem to deduce that there are neighborhoods $W,W' \subset A$ of $\Sigma$, open sets $V_{p_i}\subset U_{p_i} \cap W$ with $p_i \in V_{p_i}$ and a diffeomorphism $\phi \colon W \to W'$ such that $\phi^*\omega' = \omega$ and $\phi$ is the identity on both $\Sigma$ and on $V_{p_i}\cap W$. Pulling back the circle action from $W'$ to $W$ via $\phi$ we obtain a Hamiltonian circle action for $\omega$ which fixes $\Sigma$ pointwise and agrees with the distinguished ones on $V_{p_i}$.
\end{proof}

With this result behind us we can now blow up the singular locus

\begin{theorem}
Let $Z/\Gamma$ be an orbifold twistor space arising from Theorem \ref{mainhyper} or, more generally, associated to a hyperbolic orbifold $\H^4/\Gamma$ with all singularities modelled on (\ref{orbiaction}). Then there exists a smooth symplectic blow-up $\widetilde{Z}$ of $Z/\Gamma$ along its singular locus.
\end{theorem}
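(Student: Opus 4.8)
The plan is to equip $Z/\Gamma$ with \emph{semi-local cutting data} in the sense of Definition~\ref{local_cutting_data} and then invoke Theorem~\ref{local_torus_cuts}, arranging the data so that the resulting symplectic manifold $\widetilde{Z}$ is smooth and coincides, near each singular stratum, with the toric blow-up described in \S\ref{toric_model}. I would begin by recording the shape of the singular locus $S\subset Z/\Gamma$, which follows from \S\ref{twistor_singularities} together with Lemma~\ref{local_model_orbipoints}: $S$ is a finite union of compact symplectic surfaces $\Sigma_1,\dots,\Sigma_N$ (the twistor lifts of the totally-geodesic coordinate surfaces of $\H^4/\Gamma$ and the central twistor fibres), whose generic points have stabilizer $\Z_2$; any two of the $\Sigma_i$ meet only at the finitely many isolated $\Z_2\oplus\Z_2$-points, and near such a point exactly three of the surfaces pass through, arranged as the three rays $\{x=0,\,y=z\}$, $\{y=0,\,x=z\}$, $\{z=0,\,x=y\}$ of the polytope $P$ of \S\ref{toric_model}.

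The cutting data uses a single small parameter $\epsilon>0$, chosen only at the very end. Around each $\Z_2\oplus\Z_2$-point $p$ I take a chart $U_p$ identified with a neighbourhood of the origin in $\C^3/\Z_2^2$, carrying the coordinate Hamiltonians $(x,y,z)$ of its toric $T^3$-action and cut level $(\epsilon,\epsilon,\epsilon)$ (a regular value with the coordinate circles free on the level sets when $\epsilon$ is small); the $U_p$ are taken pairwise disjoint. For each surface $\Sigma_i$, Lemma~\ref{extending_circle_actions} provides a Hamiltonian $S^1$-action near $\Sigma_i$ fixing $\Sigma_i$, with Hamiltonian $h_i$ vanishing on $\Sigma_i$, that agrees near each $\Z_2\oplus\Z_2$-point $p\in\Sigma_i$ with the distinguished toric action there; afterwards I would replace each $U_p$ by a smaller neighbourhood of $p$ on which $h_i$ coincides, for every $\Sigma_i\ni p$, with whichever of $x,y,z$ corresponds to $\Sigma_i$. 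I then set $V_i$ to be a thin tube around $\Sigma_i$ with small neighbourhoods of the $\Z_2\oplus\Z_2$-points (each contained in the respective $U_p$) deleted, equipped with the single Hamiltonian $h_i$ and cut level $\epsilon$, and let $W$ be the complement of a thin neighbourhood of $S$, carrying no Hamiltonians. The $U_p$, the $V_i$ and $W$ then cover $Z/\Gamma$.

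The crux is verifying the compatibility conditions of Definition~\ref{local_cutting_data}, and this is where essentially all the care goes. Taking the boundaries of all these neighbourhoods to be level sets of the relevant moment maps makes every overlap torus-invariant; the $V_i$ are mutually disjoint because distinct $\Sigma_i$ meet only at the excised $\Z_2\oplus\Z_2$-points. On $V_i\cap U_p$ the Hamiltonian $h_i$ is, after the re-shrinking, literally one of the three Hamiltonians of $U_p$, with matching cut level; the other two coordinate Hamiltonians are bounded below by a positive constant there (along the ray $\Sigma_i\subset P$, at positive distance from the origin, the remaining two coordinates are positive), hence exceed $\epsilon$ once $\epsilon$ is small. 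On $W\cap U_p$ all of $x,y,z$ exceed $\epsilon$ because $\{x\le\epsilon\}\cap P$, $\{y\le\epsilon\}\cap P$ and $\{z\le\epsilon\}\cap P$ are precisely thin neighbourhoods of the three rays of $S$, which $W$ avoids; similarly $h_i>\epsilon$ on $W\cap V_i$. Since there are finitely many charts and finitely many overlaps, a single $\epsilon$ below all the thresholds that arise turns $(\{U_p\},\{V_i\},W)$ into semi-local cutting data. Carrying this bookkeeping out consistently --- in the order: fix the $U_p$, run Lemma~\ref{extending_circle_actions}, re-shrink the $U_p$, then choose the $V_i$, $W$ and finally $\epsilon$ --- is the step I expect to be the main obstacle.

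Finally, Theorem~\ref{local_torus_cuts} produces a symplectic manifold $\widetilde{Z}$ which over each chart agrees with the corresponding symplectic cut. Over $W$ there is no cutting and $Z/\Gamma$ is already smooth; over $U_p$ the cut by $(x,y,z)$ at $(\epsilon,\epsilon,\epsilon)$ is the Delzant polytope $R$ of \S\ref{toric_model}, hence is the smooth toric blow-up of the $\Z_2\oplus\Z_2$-singularity along its singular locus; over $V_i$, away from the $U_p$, the cut by $h_i$ at $\epsilon$ is the resolution $\O(-2)\times\C$ of the $A_1\times\C$-singularity, again smooth and the blow-up of the singular curve. Hence $\widetilde{Z}$ is a smooth compact symplectic manifold which is the symplectic blow-up of $Z/\Gamma$ along its singular locus, as claimed.
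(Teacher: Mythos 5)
Your proposal is correct and follows essentially the same route as the paper: fix the local toric $T^3$-models at the $\Z_2\oplus\Z_2$-points, extend the distinguished circle actions along the singular surfaces via Lemma~\ref{extending_circle_actions}, check the compatibility conditions of Definition~\ref{local_cutting_data} after normalising the Hamiltonians to vanish on their fixed loci, and cut at a single small level $\epsilon$ using Theorem~\ref{local_torus_cuts}. The only difference is one of exposition --- you spell out the cover $(\{U_p\},\{V_i\},W)$ and the overlap checks in more detail than the paper, which simply asserts that the normalisation ensures compatibility.
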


\begin{proof}
We have the following local Hamiltonian torus actions: we first fix a choice of Hamiltonian $T^3$-action near each $\Z_2 \oplus \Z_2$ point by choosing a local model there as in (\ref{z2z2act}); then Lemma \ref{extending_circle_actions} gives us a Hamiltonian $S^1$-action near each $\Z_2$-point. Normalising the Hamiltonians so that each vanishes on its fixed locus ensures that the compatibility requirements of Definition \ref{local_cutting_data} are satisfied, meaning we can cut by all Hamiltonians at level $\epsilon>0$ for some sufficiently small choice of $\epsilon$. In the case of a single Hamiltonian, the cut is symplectomorphic to a neighbourhood  of $\C\P^1$ in the resolution $\C \times \O(-2)$ of $\C \times A_1$. In the case of three Hamiltonians, the cut is symplectomorphic to a neighbourhood of the three curves $C_1, C_2, C_3$ in the toric model of \S\ref{toric_model}. It follows that the cut is a smooth symplectic manifold, which we denote by $\widetilde{Z}$.

We are justified in calling $\widetilde{Z}$ the blow-up of $Z/\Gamma$ since near each point of the singular locus we have a local model in which the cutting carries out the blow up. This means that $\widetilde{Z}$ has the diffeomorphism type of the smooth blow-up. Moreover, writing $E \subset \widetilde{Z}$ for the exceptional locus (those points corresponding to the collapsed boundary torus orbits when making the cut) it follows from the construction that $\widetilde{Z} \setminus E$ embeds symplectically as an open set $Z/\Gamma$, disjoint from the singular locus.
\end{proof}

\section{The topology of the resolution}\label{topology}

\subsection{The first Chern class}
In this section we prove that the resolution $\widetilde{Z}$ of $Z/\Gamma$ constructed in the previous section has vanishing first Chern class.

We will need the following standard topological lemma.

\begin{lemma} \label{chern1} Let $M$ be a compact oriented manifold with a line bundle $L$. Suppose that $M_1,...,M_n$ is a collection of codimension-two oriented submanifolds of $M$, and $U$ is neighborhood of $\cup_i M_i$ for which $\cup_i M_i$ is a deformation retract. Suppose that there is a non-vanishing section $s$ of $L$ defined on $M\setminus U$. Then $c_1(L)$ is Poincare dual to $\sum_ia_i[M_i]$ where $[M_i]$ are the fundamental classes of the $M_i$ and $a_i\in \mathbb Z$.
\end{lemma}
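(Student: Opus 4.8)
The plan is to represent the Poincar\'e dual of $c_1(L)$ by the zero locus of a section of $L$ that is transverse to the zero section, and then to use the deformation retraction $U \simeq \bigcup_i M_i$ to rewrite the resulting cycle as an integral combination of the classes $[M_i]$.

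First I would promote $s$ to a global section of $L$. As $s$ is nowhere vanishing on the closed set $M\setminus U$, multiplying by a cut-off function produces a smooth global section $\hat s$ of $L$ with $\hat s = s$ on $M\setminus U$; then all zeros of $\hat s$ lie in the open set $U$ and form a compact subset of it. A $C^\infty$-small perturbation $s' = \hat s + \eta$, with $\eta$ supported in a compact subset of $U$ containing $\hat s^{-1}(0)$, is transverse to the zero section everywhere (parametric transversality and Sard's theorem), still equals $s$ on $M\setminus U$, and has zero locus $Z := (s')^{-1}(0)\subset U$. Since $L$ is a complex line bundle and $M$ is oriented, $Z$ is a closed oriented codimension-two submanifold of $M$, and the standard identification of the Euler class of an oriented $2$-plane bundle with the Poincar\'e dual of the zero locus of a transverse section gives that $c_1(L)$ is Poincar\'e dual to $[Z] \in H_{m-2}(M;\Z)$, where $m = \dim M$.

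Next, since $Z\subset U$, the class $[Z]\in H_{m-2}(M;\Z)$ is the image of a class in $H_{m-2}(U;\Z)$ under the inclusion $U\hookrightarrow M$. The deformation retraction identifies $H_{m-2}(U;\Z)$ with $H_{m-2}(K;\Z)$ for $K := \bigcup_i M_i$, so $[Z]$ lies in the image of $H_{m-2}(K;\Z)\to H_{m-2}(M;\Z)$, and it suffices to show that this image is the subgroup generated by the $[M_i]$. I would obtain this from a Mayer--Vietoris argument (equivalently, the spectral sequence of the closed cover $\{M_i\}$ of $K$): when the multiple intersections $M_I = \bigcap_{i\in I}M_i$, $|I|\geq 2$, are of sufficiently low dimension --- which is precisely the situation for the resolution $\widetilde Z$, where the $M_i$ are the four-dimensional exceptional divisors and their pairwise intersections are curves inside a six-manifold --- the sequence degenerates in total degree $m-2$ to an isomorphism $H_{m-2}(K;\Z)\cong\bigoplus_i H_{m-2}(M_i;\Z)$. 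Each $M_i$ being a connected closed oriented $(m-2)$-manifold, $H_{m-2}(M_i;\Z) = \Z\cdot[M_i]$, and the composite $\Z\cdot[M_i]\hookrightarrow H_{m-2}(K;\Z)\to H_{m-2}(M;\Z)$ sends the generator to $[M_i]$. Hence $[Z] = \sum_i a_i[M_i]$ with $a_i\in\Z$ (integrality being automatic since we have worked with integral homology throughout), so $c_1(L)$ is Poincar\'e dual to $\sum_i a_i[M_i]$.

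The main obstacle is this last homological step. Everything else --- extending and perturbing the section, the Euler-class identity $c_1(L) = \PD[Z]$, and the naturality of Poincar\'e duality under the inclusion $U\hookrightarrow M$ --- is routine. But to conclude that the image of $H_{m-2}(K;\Z)$ in $H_{m-2}(M;\Z)$ is exactly spanned by the $[M_i]$ one genuinely needs some control on how the $M_i$ intersect: without a general-position hypothesis forcing $\dim(M_i\cap M_j) < m-2$ (in fact $\leq m-4$ for the Mayer--Vietoris collapse to be immediate), the space $K$ can carry $(m-2)$-cycles not accounted for by the fundamental classes of its pieces, so the statement as literally displayed would need either such a hypothesis or, as here, an appeal to the explicit geometry of the singular configuration at hand.
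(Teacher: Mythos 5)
The paper states this lemma without proof, labelling it ``standard'', so there is no argument of the authors' to compare against; your write-up supplies exactly the standard argument that is being invoked. The three main steps --- extending $s$ to a global section with zeros confined to $U$, perturbing to transversality so that $c_1(L)=e(L)$ is Poincar\'e dual to the zero set $Z\subset U$, and pushing $[Z]$ through $H_{m-2}(U)\cong H_{m-2}\bigl(\bigcup_i M_i\bigr)\to H_{m-2}(M)$ --- are all correct, and your final caveat is a genuine observation rather than a gap in your own reasoning: as literally stated the lemma needs some control on the mutual intersections of the $M_i$, since for badly intersecting pieces $H_{m-2}\bigl(\bigcup_i M_i;\Z\bigr)$ can contain classes not in the span of the $[M_i]$ (e.g.\ two surfaces in a $4$-manifold meeting along a circle that bounds in each). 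In the application the $M_i$ are the exceptional divisors $E_i$ of $\widetilde{Z}$, whose pairwise intersections are the curves $C_k$ (real dimension $2=m-4$) and whose triple intersections are points, so $H_{m-2}$ and $H_{m-3}$ of all multiple intersections vanish and your Mayer--Vietoris collapse $H_{m-2}(K)\cong\bigoplus_i H_{m-2}(M_i)$ goes through; with that understood, the proof is complete.
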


\begin{proposition}\label{c1_vanishes}
Let $\widetilde{Z}$ be the blow-up of the singular locus of an orbifold twistor space $Z/\Gamma$ arising from Theorem \ref{mainhyper}. Then $\widetilde{Z}$ is a symplectic Calabi--Yau manifold.
\end{proposition}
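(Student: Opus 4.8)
The plan is to compute $c_1(\widetilde Z)$ by exhibiting a nowhere-vanishing section of the canonical bundle $K_{\widetilde Z}$ away from the exceptional locus $E$, and then use Lemma~\ref{chern1} together with the crepancy of the local resolutions to conclude that $c_1(\widetilde Z) = 0$ exactly. Recall from \S\ref{coadjoint} that $Z/\Gamma$, being the twistor space of $\H^4/\Gamma$, carries an $\SO(4,1)$-invariant nowhere-vanishing section $\Omega$ of its ($J_{\mathrm{ES}}$-)canonical bundle. Since $\widetilde Z \setminus E$ embeds symplectically into $Z/\Gamma$ as an open set disjoint from the singular locus (as noted at the end of \S\ref{doing_the_blowup}), and the orbifold canonical bundle of $Z/\Gamma$ is a genuine line bundle away from the singular points, $\Omega$ restricts to give a nowhere-vanishing section $\widetilde\Omega$ of $K_{\widetilde Z}$ over $\widetilde Z \setminus E$. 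Here one should be a little careful: $E$ has several irreducible components (the exceptional $\C\P^1$'s over the $\Z_2$-divisors, and the components $E_1, E_2, E_3$ and curves $C_i$ over the $\Z_2\oplus\Z_2$-points), but since the resolution $\widetilde Z$ is diffeomorphic to the smooth blow-up and the exceptional locus deformation-retracts onto a union of codimension-two submanifolds (the divisorial components), Lemma~\ref{chern1} applies and gives $c_1(K_{\widetilde Z}) = \PD\big(\sum_i a_i [E_i]\big)$ for integers $a_i$, where the $E_i$ now range over the codimension-two (divisorial) components of the exceptional locus.

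The second step is to pin down the integers $a_i$, and here the point is that $a_i$ is precisely the \emph{discrepancy} of the $i$-th exceptional divisor, which the construction has arranged to be zero. Concretely, $a_i$ can be computed locally near a generic point of $E_i$: near such a point the blow-up is modelled on one of the two toric model resolutions of \S\ref{toric_model}, namely $\C \times \O(-2) \to \C \times A_1$ or the symmetric resolution $R \to P$ of $\C^3/(\Z_2\oplus\Z_2)$. In both toric models the resolution was explicitly checked to be crepant --- the first is $A_1 \times \C$ with its standard crepant resolution $\O(-2)\times\C$, and the second was built by choosing the Delzant cuts $x\ge 1$, $y\ge 1$, $z\ge 1$ so that the primitive normals at every vertex form a lattice basis, which is exactly the condition that the canonical form pulls back without zeros or poles along the exceptional divisors. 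Thus the locally-defined holomorphic volume form on the model (invariant under the torus, descending from the standard one on $\C^3$) extends nowhere-vanishingly across each model exceptional divisor, forcing $a_i = 0$. Matching the local model section with the global section $\widetilde\Omega$ up to a nowhere-vanishing factor on the overlap shows that $\widetilde\Omega$ itself extends across each $E_i$ without zeros, i.e. $a_i = 0$ for all $i$.

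With all $a_i = 0$ we get $c_1(K_{\widetilde Z}) = 0$ in $H^2(\widetilde Z;\Z)$, hence $c_1(\widetilde Z) = -c_1(K_{\widetilde Z}) = 0$ as an integral class; combined with the fact, established in \S\ref{doing_the_blowup}, that $\widetilde Z$ is a smooth compact symplectic manifold, this is exactly the statement that $\widetilde Z$ is a symplectic Calabi--Yau manifold. (One should also note $\widetilde Z$ is compact since $Z/\Gamma$ is compact --- $\H^4/\Gamma$ is compact by Theorem~\ref{mainhyper} and the twistor fibre $S^2$ is compact --- and blowing up along a compact singular locus preserves compactness.)

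I expect the main obstacle to be the bookkeeping needed to apply Lemma~\ref{chern1} cleanly: one must check that the exceptional locus $E \subset \widetilde Z$ genuinely admits a neighbourhood retracting onto a union of \emph{smooth} codimension-two submanifolds, which requires knowing the global structure of $E$ over each irreducible component $\Sigma$ of the singular locus (the $\Z_2$-divisor picture glued to the $\Z_2\oplus\Z_2$-point pictures via Lemma~\ref{extending_circle_actions}), and one must verify that the locally-crepant trivialisations of $K$ near the various model divisors are consistent with the single global form $\widetilde\Omega$ --- that is, that no monodromy or gluing discrepancy reintroduces a nonzero coefficient. Once the local crepancy computations of \S\ref{toric_model} are in hand, though, this is essentially a patching argument rather than a genuinely new difficulty.
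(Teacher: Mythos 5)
Your first step---extending the Eells--Salamon almost complex structure's canonical bundle over all of $\widetilde Z$, using the nowhere-vanishing section on $\widetilde Z\setminus E$ coming from the symplectic identification with the smooth part of $Z/\Gamma$, and invoking Lemma~\ref{chern1} to get $c_1 = \PD\bigl(\sum_i a_i[E_i]\bigr)$---is exactly what the paper does. The divergence, and the gap, is in how you show $a_i=0$. You argue that the local toric models are crepant, so the local model's volume form extends without zeros across the model exceptional divisor, and then assert that since the global section $\widetilde\Omega$ agrees with the local model section ``up to a nowhere-vanishing factor on the overlap'', $\widetilde\Omega$ itself extends without zeros. This last inference is a non sequitur: the overlap is a punctured neighbourhood of $E_i$, and a nowhere-vanishing $\C^*$-valued factor on such a region can have nonzero winding around $E_i$. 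That winding number \emph{is} the coefficient $a_i$ you are trying to compute, so as written the argument is circular at precisely the point where the content lies. (There is also a smaller issue you elide: $\widetilde\Omega$ trivializes the $J_{\mathrm{ES}}$-canonical bundle while the local model's form trivializes the canonical bundle of the Darboux-local complex structure; these are only isomorphic up to homotopy via contractibility of the space of compatible almost complex structures, so ``the'' ratio is only defined after such a choice.) The gap is fillable---for instance, downstairs the ratio of the two trivializations is a $\C^*$-valued function on a punctured orbifold chart whose fundamental group is finite, so its winding around the singular locus vanishes---but you neither state nor prove anything of this kind.

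The paper avoids this issue entirely by an intersection-theoretic computation rather than an extension argument: near a generic point of $E_i$ the local model $\O(-2)\times\C$ contains a $2$-sphere $S^2$ (the zero section of $\O(-2)$) with $\langle S^2, E_i\rangle = -2$, $\langle S^2,E_j\rangle=0$ for $j\ne i$, and $L|_{S^2}$ trivial; pairing $c_1(L)=\PD\bigl(\sum_j a_j[E_j]\bigr)$ against $[S^2]$ gives $-2a_i=0$. This only uses the isomorphism class of $L$ near $E_i$, which is insensitive to the choice of compatible almost complex structure, so no comparison of sections is needed. I recommend either adopting that computation or supplying the missing winding-number argument; the rest of your write-up (compactness, the structure of $E$, the applicability of Lemma~\ref{chern1}) is fine.
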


\begin{proof}
Denote by $E=\cup_i E_i$ the union of all irreducible exceptional divisors $E_i$ of $\widetilde{Z}$ and let $U$ be an open neighborhood of $E$ such that $E$ is a deformation retract of $U$. Recall that $\widetilde Z\setminus U$ can be symplectically identified with an open subset of $Z$ containing only its smooth points. Hence the Eells--Salamon almost complex structure on $Z$ induces an almost
complex structure on $\widetilde Z\setminus U$. We extend this structure to the whole of $\widetilde Z$ and denote the corresponding canonical bundle by $L$. As was explained in Section \ref{coadjoint} the bundle $L$ has a non-vanishing section $s$ over $\widetilde {Z}\setminus U$, and so, by Lemma \ref{chern1}, $c_1(L)$ is Poincare dual to $\sum a_i [E_i]$. To finish the proof we will show that $a_i=0$ for all $i$.

This follows from the local description of the resolution given in Section~\ref{toric_model}. Near each point $p\in E_i$ that does not lie on $E_j$ for $j\ne i$, the resolution is locally modeled on the resolution $\O(-2) \times \C$ of $A_1\times \C$. Here $A_1 = \C^2/\Z_2$, with resolution the total space of $\O(-2) \to \C$. The exceptional locus $E_i$ corresponds to $\C\P^1 \times \C$, where $\C\P^1 \subset \O(-2)$ is the zero section. It follows that there is a $2$-sphere $S^2 \subset \widetilde{Z}$ such that $p\in S^2$, $\langle S^2, E_i \rangle=-2$; moreover $S^2$ does not intersect the divisors $E_j$ for $j\ne i$ and, finally, the restriction of the canonical bundle to $S^2$ is trivial.
\end{proof}

\subsection{The fundamental group}

In this section we prove that $\pi_1(\widetilde{Z})$ is isomorphic to $\pi_1(\H^4/\Gamma)$. To do this we will use the following result. Before stating it, we recall that a space is said to be $LC^n$ if every point has a neighborhood which is $n$-connected (i.e., all homotopy groups up to and including $\pi_n$ are trivial). In particular, all $CW$-complexes are $LC^n$ for every $n$, which is the only situation we will consider. Moreover, we need only the $n=2$ case of the following result.

\begin{theorem}[Smale, \cite{smale}]\label{fundamental_group_theorem}
If $f \colon X \to Y$ is a proper, surjective map of connected, locally compact, separable metric spaces,  $X$ is $LC^n$, and each fibre is $LC^{n-1}$  and $(n-1)$-connected, then the induced homomorphism $\pi_j(X) \to \pi_j(Y)$  is an isomorphism for $j\leq n-1$  and surjective for $j=n$.
\end{theorem}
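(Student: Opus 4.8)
The plan is to deduce the whole theorem from a single \emph{relative lifting property} valid in dimensions $\le n$, and then to obtain that property by an obstruction-theoretic argument. Concretely, I want the following: for every finite $CW$ pair $(K,L)$ with $\dim K\le n$, every map $g\colon K\to Y$ and every lift $\tilde g_L\colon L\to X$ of $g|_L$, there is an extension $\tilde g\colon K\to X$ of $\tilde g_L$ with $f\circ\tilde g$ homotopic to $g$ rel $L$. Granting this, surjectivity of $f_*\colon\pi_j(X)\to\pi_j(Y)$ for $j\le n$ follows by applying it to $(K,L)=(S^j,\mathrm{pt})$, and injectivity of $f_*$ on $\pi_j$ for $j\le n-1$ follows by applying it to $(K,L)=(D^{j+1},S^j)$, with $\tilde g_L$ a given map $\beta\colon S^j\to X$ whose class is killed by $f_*$ --- here $\dim K=j+1\le n$, which is exactly why the injectivity range is one smaller than the surjectivity range. (The case $j=0$ is trivial since $X$ and $Y$ are connected, and the based-versus-free null-homotopy bookkeeping is routine for $j\ge1$.) So it is enough to solve these lifting problems.

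I would prove the relative lifting property by induction on the skeleta $L\cup K^{(i)}$ for $0\le i\le n$. The base case $i=0$ uses only that $f$ is onto, so the point-preimages are non-empty. At stage $i$, for each $i$-cell of $K$ not in $L$ one is handed a map $\Delta^i\to Y$ together with a lift of its restriction to $\partial\Delta^i$, and must extend the lift over $\Delta^i$ covering the given map (up to homotopy rel boundary, which is all we need). \emph{If $f$ were a Serre fibration} with fibre $F$ this would be routine obstruction theory: the obstruction to extending a lift over an $i$-cell lies in $\pi_{i-1}(F)$, and since $F$ is $(n-1)$-connected this group vanishes for all $i\le n$, so a lift over all of $K$ exists whenever $\dim K\le n$. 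The substance of Smale's theorem is a replacement for the homotopy lifting property in this range: a proper surjection between locally compact separable metric spaces, with $LC^{n-1}$ and $(n-1)$-connected point-preimages and $LC^n$ total space, behaves enough like a fibration on cells of dimension $\le n$ for the above extension step to go through. To obtain this I would subdivide $\Delta^i$ so finely that each small simplex maps into a small open set $U\subset Y$, build a local lift over each such $U$ using the $LC^n$ property of $X$, then patch the local lifts along overlaps, using the $(n-1)$-connectivity of the fibres to kill the patching discrepancies, and iterate the patching over a cofinal sequence of ever-finer open covers, passing to a limit. This is the homotopy analogue of the Vietoris--Begle mapping theorem; making the limiting patching converge is where properness and local compactness are used, and it requires as a preliminary that $Y$ itself be $LC^n$ --- equivalently, that $Y$ admit cofinal systems of covers fine enough for the local lifting step --- which one deduces from the $LC$ hypotheses on $X$ and on the fibres together with properness.

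The step I expect to be the real obstacle is exactly this last one: the reduction of the $\pi_j$-statements to relative lifting and the skeletal induction are formal, but substituting ``lift over small pieces, patch, and pass to a limit over refinements'' for the homotopy lifting property --- and first establishing the auxiliary local connectivity of $Y$ --- is the technical heart of the argument and the place where all the hypotheses are genuinely consumed. For the application in this paper only $n=2$ is needed, where the relevant map has simply connected fibres, so only the cases $i\le2$ of the induction and a single round of refinement are required; but it is cleanest to run the argument uniformly in $n$ as above.
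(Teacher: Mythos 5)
This statement is not proved in the paper at all: it is quoted verbatim as Smale's Vietoris mapping theorem for homotopy and used as a black box, with \cite{smale} as the only justification. So there is no in-paper argument to compare yours against; what can be said is how your sketch relates to Smale's original proof, which is indeed the intended source.

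Your outline reproduces the correct architecture of that proof: the reduction of the $\pi_j$ statements to a relative lifting property for pairs $(K,L)$ with $\dim K\le n$ is formal and correct (including the dimension count explaining why injectivity holds only for $j\le n-1$), and the skeletal induction with obstructions in $\pi_{i-1}$ of the fibres is the right replacement for fibration-style lifting. But the step you yourself flag as ``the real obstacle'' is precisely the content of Smale's paper, and in your write-up it remains a description rather than a proof. Two things are missing concretely. First, the patching cannot be done exactly: since $f$ is only a surjection, local lifts over overlapping open sets of $Y$ land in different parts of a point-preimage, and one can only arrange that $f\circ(\text{lift})$ is \emph{close} to the given map, not equal to it. Smale handles this with a quantitative lemma of the form: for every $\epsilon>0$ there is $\delta>0$ such that any map of an at most $n$-dimensional compact polyhedron into $Y$ admitting a $\delta$-approximate partial lift admits an $\epsilon$-approximate lift extending it up to small homotopy; the actual lift-up-to-homotopy is then the limit of an iteration over a sequence $\epsilon_k\to 0$, with properness and local compactness guaranteeing that the successive corrections stay in a compact set and converge. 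Your phrase ``iterate the patching over a cofinal sequence of ever-finer open covers, passing to a limit'' names this but does not establish the uniform $\epsilon$--$\delta$ control that makes the limit exist and remain a lift. Second, the auxiliary claim that $Y$ is $LC^n$ (needed to subdivide $K$ compatibly with small open sets of $Y$) is itself a nontrivial lemma of Smale's, not a remark. As a roadmap your proposal is faithful to the source; as a standalone proof it is incomplete at exactly the point where all the hypotheses (properness, local compactness, separability, the $LC$ conditions) are consumed.
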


\begin{proposition}\label{fundamental-group-prop}
Let $\widetilde{Z}$ be the blow-up of the singular locus of an orbifold twistor space $Z/\Gamma$ arising from Theorem \ref{mainhyper}. Then $\pi_1(\widetilde{Z}) = \pi_1(\H^4/\Gamma)$.
\end{proposition}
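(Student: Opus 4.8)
The plan is to apply Smale's theorem (Theorem~\ref{fundamental_group_theorem}) to a carefully chosen proper surjective map $f\colon \widetilde Z \to \H^4/\Gamma$, with $n=2$, so as to conclude that $\pi_1(\widetilde Z)\to \pi_1(\H^4/\Gamma)$ is an isomorphism. Along the way one uses the elementary observation that the topological fundamental group of the orbifold $\H^4/\Gamma$ coincides with $\pi_1$ of its underlying topological space, which is the thing Smale's theorem sees, and (as recalled in \S\ref{coadjoint}) that $\pi_1$ of $Z/\Gamma$ agrees with $\pi_1$ of $\H^4/\Gamma$ because the generic fibre $S^2$ is simply connected. The real content is producing a map from the \emph{resolution} $\widetilde Z$ down to $\H^4/\Gamma$ with the right fibre-connectivity properties.

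First I would construct $f$. Off the exceptional locus $E$, the blow-up $\widetilde Z\setminus E$ is symplectically identified with an open subset of $Z$ disjoint from the singular locus, so the twistor projection $t\colon Z\to \H^4$ (descending to $Z/\Gamma\to\H^4/\Gamma$) gives a map there. Near each component of the singular locus, the resolution is modelled on $\O(-2)\times\C$ over $A_1\times\C$ (at a $\Z_2$-point) or on the toric resolution $R$ of $\C^3/\Z_2^2$ (at a $\Z_2\oplus\Z_2$-point); in each local model the blow-down map $\widetilde Z\supset$ (model) $\to (\C^3/\Z_2^{(2)})$ composed with the already-existing local map of $Z/\Gamma$ to $\H^4/\Gamma$ defines $f$ near $E$, and these agree with the twistor projection off $E$. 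One must check these local definitions patch together to a continuous (indeed, away from $E$, smooth) proper surjective map $f\colon\widetilde Z\to\H^4/\Gamma$; properness and surjectivity are automatic from compactness of $\widetilde Z$ and surjectivity of the twistor projection. Both $\widetilde Z$ and $\H^4/\Gamma$ are compact metrizable $CW$-complexes, hence connected, locally compact, separable, and $LC^n$ for all $n$, so the hypotheses of Theorem~\ref{fundamental_group_theorem} on the spaces are met.

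Next I would verify the fibre hypotheses: for $n=2$ we need every fibre $f^{-1}(q)$ to be $LC^1$ and simply connected. There are three cases. Over a smooth point of $\H^4/\Gamma$ the fibre is the usual twistor $S^2$, which is simply connected. Over a $\Z_2$-point $q$ (lying on one coordinate plane, say on a lift of $\Pi_{ij}$), the fibre of $Z/\Gamma\to\H^4/\Gamma$ is a quotient of $S^2$ by a $\Z_2$ acting with fixed points, hence topologically $S^2$ again; under the resolution two points of this fibre get replaced by $\C\P^1$'s (from the $\O(-2)\times\C$ model, where the exceptional $\C\P^1$ sits over the singular $\C$), so $f^{-1}(q)$ is $S^2$ with two spheres attached at points — a simply connected finite $CW$-complex. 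Over a $\Z_2^2$-point $q$ (the central point $p$), the fibre of $Z/\Gamma\to\H^4/\Gamma$ is $D\backslash\SO(4)/\U(2)=S^2/\Z_2^{(\mathrm{eff})}$ which is again $S^2$ topologically, and the resolution replaces the six octahedral vertices and the relevant curves by the toric exceptional pieces of the model $R$; the resulting fibre is a connected union of $2$-spheres glued along points and arcs, again visibly simply connected and $LC^1$. In every case the fibre is a finite $CW$-complex, hence $LC^1$, and one checks simple connectivity by a van Kampen argument using that it is $S^2$ with simply connected pieces attached along connected sets.

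The main obstacle I anticipate is the bookkeeping in the last step: one must pin down precisely what the fibres $f^{-1}(q)$ look like for $q$ an orbifold point, which requires combining the transverse orbifold structure on the twistor fibre (points of the central fibre carry their own $\Z_2$ or $\Z_2^2$ transverse stabilizers, as flagged in \S\ref{coadjoint}) with the local toric resolution data of \S\ref{toric_model}, and then confirming simple connectivity. Once the fibres are correctly identified as finite $CW$-complexes built from $2$-spheres by gluing along points and arcs, $LC^1$ and $1$-connectedness are routine, Smale's theorem applies, and we get $\pi_1(\widetilde Z)\cong\pi_1(\H^4/\Gamma)$; combined with the fact that one may also run the same argument with $f$ replaced by $\widetilde Z\to Z/\Gamma$ (or simply note the induced map factors through $Z/\Gamma$), this completes the proof.
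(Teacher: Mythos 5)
Your overall strategy---apply Smale's Vietoris-type theorem with $n=2$ to a blow-down-plus-twistor-projection map and check that all fibres are simply connected---is exactly the paper's. The difference is that the paper factors the map as $\widetilde Z \xrightarrow{f} Z/\Gamma \xrightarrow{t} \H^4/\Gamma$ and applies Smale's theorem \emph{twice}, once to each factor, whereas your main line of argument applies it once to the composite. This matters, because your identification of the composite fibre over the central point $p$ (the $\Z_2^3$-point of $\H^4/\Gamma$; note it is not a ``$\Z_2^2$-point'' of the base, even though the action on the twistor fibre factors through $\Z_2^2$) is wrong. By the lemma in \S\ref{twistor_singularities}, the \emph{entire} central twistor fibre (minus the six octahedral points) consists of $\Z_2$-singular points of $Z/\Gamma$; it is part of an irreducible component of the singular locus. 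The blow-up therefore replaces this whole $2$-sphere by a $4$-dimensional exceptional divisor (an $S^2$-bundle over it, further modified over the six points), so the fibre of the composite over $p$ is $4$-dimensional---not ``a connected union of $2$-spheres glued along points and arcs.'' The conclusion (simple connectivity of that fibre) is presumably still true, but it is no longer ``visible'' and would need its own argument.

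This is precisely the bookkeeping headache that the paper's factorization avoids: the fibres of $\widetilde Z \to Z/\Gamma$ are a point, a single $2$-sphere, or three $2$-spheres meeting at a point, and the fibres of $Z/\Gamma \to \H^4/\Gamma$ are (orbifold) $2$-spheres, all transparently simply connected. You do gesture at this factorization in your final parenthesis, and running the argument that way repairs the proof and reproduces the paper's; but as the main line stands, the fibre analysis over the $\Z_2^3$-points is incorrect and the simple connectivity of the true ($4$-dimensional) fibre is left unestablished. Your treatment of the $\Z_2$-points of the base is fine, though note you also omit the $\Z_2^2$-points of $\H^4/\Gamma$ (points on the coordinate axes in the local model), which form a third case for the base.
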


\begin{proof}
There is a continuous surjective map $f \colon \widetilde{Z} \to Z/\Gamma$. This is because topologically $\widetilde{Z}$ is the blow-up of the singular locus in $Z/\Gamma$. The fibre of $f$ away from the singular locus is a single point. The fibre of $f$ at a $\Z_2$-orbifold point is a 2-sphere, since here the blow-up is modeled on the resolution $\C \times \O(-2) \to \C \times A_1$. At a $\Z_2 \oplus \Z_2$-orbifold point the fibre of $f$ is three 2-spheres meeting at a single point, because here the resolution is modeled on the toric picture described in \S\ref{toric_model}. In all cases the fibre of $f$ is simply connected and so, by Theorem \ref{fundamental_group_theorem}, $f$ induces an isomorphism $\pi_1(\widetilde{Z}) \to \pi_1(Z/\Gamma)$.

Next we consider the twistor projection $t \colon Z/\Gamma \to \H^4/\Gamma$. The fibre of $t$ over a smooth point is a 2-sphere. Over an orbifold point we have an orbifold 2-sphere. Either way the fibre is simply connected and so, by Theorem \ref{fundamental_group_theorem}, $t$ induces an isomorphism $\pi_1(Z/\Gamma) \to \pi_1(\H^4/\Gamma)$.
\end{proof}

\subsection{The second and third Betti numbers}
We now compute the Betti numbers of $\widetilde{Z}$ and prove Theorem \ref{main}. We will need to use a slightly strengthened version of Theorem \ref{mainhyper} (\cite{PP}, Lemma 5.2).

 \begin{theorem}\label{b3} Fix positive integers $c$ and $g$ and a finitely presented group $G$. Then there exists a compact oriented hyperbolic orbifold $\H^4/\Gamma$, with local orbifold groups $\mathbb Z_2^k$, $k=1,2,3$, with actions modelled by (\ref{orbiaction}),  having $\pi_1(\H^4/\Gamma)\cong G$, and such that the irreducible components of the singular locus in the corresponding twistor space $Z/\Gamma$ include at least $c$ genus-$g$ curves of $\mathbb Z_2$-singularities.
\end{theorem}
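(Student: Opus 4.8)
The strategy is to reduce the statement to the strengthened version of the Panov--Petrunin theorem, \cite[Lemma 5.2]{PP}, by understanding precisely which curves appear in the singular locus of $Z/\Gamma$ and arranging, via passage to a suitable finite orbi-cover, that many of them have the prescribed genus. First I would recall from \S\ref{twistor_singularities} the dictionary between the singular locus of $\H^4/\Gamma$ and that of $Z/\Gamma$: an irreducible component $S$ of the $\Z_2$-singular locus of $\H^4/\Gamma$ is an immersed, orientable, totally-geodesic surface (a piece of some $\Pi_{ij}$), and by point (4) of \S\ref{coadjoint} it has two disjoint twistor lifts to $Z/\Gamma$, each a symplectic surface of $\Z_2$-singularities diffeomorphic to $S$ itself. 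The $\Z_2\oplus\Z_2$-points of $\H^4/\Gamma$ (where two orthogonal coordinate planes cross) sit on $S$ and correspond, in $Z/\Gamma$, to the transverse intersection points of the various lifts; crucially, a lift of a single $\Pi_{ij}$ acquires $\Z_2\oplus\Z_2$-points only at those finitely many crossing points, so its genus is just the genus of $S$ as an abstract surface once we have controlled how many such crossings lie on it. Thus producing $c$ genus-$g$ curves of $\Z_2$-singularities in $Z/\Gamma$ amounts to producing, in $\H^4/\Gamma$, at least $c$ closed totally-geodesic surfaces, each a genus-$g$ surface, carrying only $\Z_2$- and $\Z_2\oplus\Z_2$-singularities in the required local form.

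The second step is to invoke the refined construction of \cite{PP}. There one builds the telescopic orbifold from right-angled $120$-cells, and the totally-geodesic surfaces in question are visible already at the level of the $120$-cell combinatorics (they are assembled from the $2$-faces along which mirrors meet at right angles). \cite[Lemma 5.2]{PP} is exactly the statement that, for any finitely presented $G$ and any prescribed lower bounds, one can pass to a finite orbi-cover $\H^4/\Gamma$ with $\pi_1\cong G$ whose singular locus contains as many totally-geodesic surface components of as high a genus as desired. Concretely: taking a finite cover multiplies both the number of surface components and their Euler characteristics (a connected $d$-fold cover of a genus-$g_0$ surface has genus $1+d(g_0-1)$), so by choosing the cover appropriately one first makes the genus of some component at least $g$, and then, by taking a further cover in which that component has many disjoint lifts, one gets at least $c$ disjoint copies. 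One must keep track that covering does not destroy the local orbifold structure --- but finite orbi-covers of $\H^4/\Gamma$ automatically have local groups which are subgroups of $\Z_2^3$ acting by (\ref{orbiaction}), so this is free.

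Translating back up to the twistor space finishes the argument: each of the $c$ prescribed genus-$g$ totally-geodesic surfaces $S\subset\H^4/\Gamma$ has two lifts to $Z/\Gamma$, and picking one lift of each gives $c$ irreducible components of the singular locus of $Z/\Gamma$, each diffeomorphic to $S$ and hence a genus-$g$ curve of $\Z_2$-singularities, as required; the remaining assertions (compactness, orientability, $\pi_1\cong G$, local models (\ref{orbiaction})) are inherited directly from $\H^4/\Gamma$ via \cite{PP} and the discussion of \S\ref{coadjoint}. The main obstacle is the bookkeeping in the middle step: one has to be careful that the surface components one is tracking through the tower of covers remain \emph{embedded} (or at least that a chosen lift is embedded) and of the claimed genus, and that the $\Z_2\oplus\Z_2$-points lying on them do not secretly raise the genus of the corresponding twistor lift; this is precisely the content one imports from \cite[Lemma 5.2]{PP} together with the local picture of \S\ref{twistor_singularities}, so in the write-up the real work is citing and applying that lemma correctly rather than any new geometric input.
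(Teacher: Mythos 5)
Your proposal matches the paper's treatment: Theorem~\ref{b3} is not proved in the paper at all but is imported wholesale as \cite[Lemma 5.2]{PP} (a strengthening of Theorem~\ref{mainhyper}), with the passage from totally-geodesic surfaces of $\Z_2$-singularities in $\H^4/\Gamma$ to genus-$g$ curves of $\Z_2$-singularities in $Z/\Gamma$ via the two disjoint twistor lifts of \S\ref{coadjoint} left implicit, exactly as you describe. Your additional sketch of how \cite[Lemma 5.2]{PP} might itself be proved by passing to finite orbi-covers is speculation about the cited reference rather than part of the argument needed here, so it neither adds to nor detracts from the correctness of the reduction.
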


\begin{proposition}\label{b2b3} Let $\H^4/\Gamma$ be a hyperbolic orbifold arising from Theorem \ref{mainhyper} or, more generally, one in which all local orbifold groups are $\Z_2^k$, $k=1,2,3$, with actions modelled on (\ref{orbiaction}). Write $Z/\Gamma$ for the orbifold twistor space. Let $n$ be the number of irreducible components of the singular locus of $Z/\Gamma$ and let $m$ the sum of their genuses. Then for the blow-up $\widetilde{Z}$ of $Z/\Gamma$ along the singular locus we have:  $b_2(\widetilde{Z})-b_2(Z/\Gamma)=n$ and
$b_3(\widetilde{Z})-b_3(Z/\Gamma)=2m$.

\end{proposition}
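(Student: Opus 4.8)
## Proof proposal for Proposition~\ref{b2b3}

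The plan is to compute the change in Betti numbers by a Mayer--Vietoris argument comparing $\widetilde{Z}$ with $Z/\Gamma$ across the exceptional locus $E = \bigcup_i E_i$. Write $U$ for a tubular neighbourhood of $E$ in $\widetilde{Z}$ that deformation retracts onto $E$, and $U_0$ for the corresponding punctured neighbourhood $U \setminus E$; since $\widetilde{Z} \setminus E$ is symplectically identified with $(Z/\Gamma)$ minus its singular locus, $U_0$ is also a punctured neighbourhood of the singular locus $\Sigma = \bigcup_i \Sigma_i$ inside the orbifold $Z/\Gamma$. First I would analyse the local topology of $E$, $U$ and $U_0$ over each stratum of $\Sigma$, using the explicit toric models of \S\ref{toric_model}. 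Over a component $\Sigma_i$ of genus $g_i$ away from the $\Z_2\oplus\Z_2$-points, $E_i$ is the total space of a $\C\P^1$-bundle over $\Sigma_i$; the $\Z_2\oplus\Z_2$-points modify this picture over finitely many fibres by replacing the $\C\P^1$ fibre with a chain of three $(-1)$-curves. The key point is that each $E_i$ has the rational homology of a (possibly nodal) ruled surface over $\Sigma_i$, so $b_0(E_i) = b_4(E_i) = 1$, $b_1(E_i) = b_3(E_i) = 2g_i$, and $b_2(E_i)$ is determined by the number of rational components; when one assembles all the $E_i$, glued along the $(-1)$-curves at the $\Z_2\oplus\Z_2$-points, one gets $b_0(E) = $ (number of connected components of $\Sigma$), $b_1(E) = b_3(E) = 2m$ where $m = \sum_i g_i$, and $b_4(E) = b_0(E)$.

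Next I would compute the homology of the orbifold $Z/\Gamma$ near the singular locus, i.e.\ of a (closed) tubular neighbourhood $\overline{N}$ of $\Sigma$ and of its boundary $\partial N$, which is also the boundary of $\overline{U}$. The orbifold neighbourhood $\overline{N}$ deformation retracts onto $\Sigma$, so it has the same rational homology as $\Sigma$: $b_0 = b_0(\Sigma)$, $b_1 = b_3 = 2m$, $b_2$ accounting for the $\Z_2\oplus\Z_2$-points. Crucially, the inclusion $\overline{N} \hookrightarrow \overline{U}$ (as orbifold, resp.\ manifold, neighbourhoods sharing the boundary $\partial N$) induces, over each stratum, the collapse $\C\P^1 \to \mathrm{pt}$ on fibres; on rational homology this is an isomorphism in degrees $0,1,3$ and kills exactly the fibre class in degree $2$ (and the corresponding top class in degree $4$). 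So $H_*(\overline{U};\Q)$ differs from $H_*(\overline{N};\Q)$ only in that it has $n$ extra classes in degree $2$ (one per irreducible component $\Sigma_i$, the fibre $\C\P^1$ over that component) — here one must check that the chains-of-three-$(-1)$-curves over the $\Z_2\oplus\Z_2$-points contribute exactly one new independent degree-$2$ class per component $\Sigma_i$ globally, which follows from the toric description: the three curves $C_1,C_2,C_3$ and the fibre classes are all rationally equivalent up to the classes already present in $\Sigma$.

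Finally I would run Mayer--Vietoris on $\widetilde{Z} = (\widetilde{Z}\setminus E) \cup U$ and on $Z/\Gamma = ((Z/\Gamma)\setminus\Sigma) \cup N$, noting that the two "outer" pieces $\widetilde{Z}\setminus E$ and $(Z/\Gamma)\setminus\Sigma$ are identified, that $U_0 \simeq \partial N$ is common to both sequences, and that the only difference between the two long exact sequences is the replacement of $H_*(\overline{N})$ by $H_*(\overline{U})$. Subtracting the two sequences (equivalently, using the excision/pair comparison $H_*(\widetilde{Z}, \widetilde{Z}\setminus E) = H_*(\overline{U},\partial N)$ versus $H_*(Z/\Gamma, Z/\Gamma\setminus\Sigma) = H_*(\overline{N},\partial N)$), the change in each Betti number of the total space is governed by the change $H_*(\overline{U},\partial N) \to H_*(\overline{N},\partial N)$, which by the above is: nothing in degrees $0,1$; an increase of $n$ in degree $2$; nothing in degree $3$; and dually an increase in degree $4$ balanced against degree $5$ (giving no net change to $b_3$). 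Keeping careful track, one obtains $b_2(\widetilde{Z}) - b_2(Z/\Gamma) = n$ and $b_3(\widetilde{Z}) - b_3(Z/\Gamma) = 0$.

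I expect the main obstacle to be the bookkeeping at the $\Z_2\oplus\Z_2$-points: one must verify that replacing a single $\C\P^1$ fibre by a chain of three $(-1)$-curves, simultaneously over all such points and over all components $\Sigma_i$ meeting at a given point, introduces exactly one net new rational degree-$2$ class per irreducible component $\Sigma_i$ (and correspondingly controls degree $4$), with no hidden contribution to $b_3$. This requires understanding how the three local divisors $E_1,E_2,E_3$ in the $\C^3/\Z_2^2$-model glue into the global $E_i$'s along the components of $\Sigma$, and checking that the extra $2$-cycles created at different $\Z_2\oplus\Z_2$-points on the same $\Sigma_i$ are all rationally dependent modulo $H_2(\Sigma_i)$ — which is exactly what the explicit normal-crossing toric geometry of \S\ref{toric_model} provides. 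Once that local-to-global homological input is pinned down, the two Mayer--Vietoris sequences line up termwise and the stated equalities drop out.

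Wait — I should double-check the claim $b_3(\widetilde Z)-b_3(Z/\Gamma)=2m$ against my sketch, which gave $0$. The discrepancy means my degree-$3$ analysis is too hasty: the collapse $\C\P^1\to\mathrm{pt}$ on a ruled surface over a genus-$g$ curve does affect $H_3$ through the interaction of the fibre class with $H_1$ of the base. Concretely, $H_3$ of a $\C\P^1$-bundle over $\Sigma_g$ is generated by (fibre)$\times H_1(\Sigma_g)$, and these classes do \emph{not} survive the collapse, so $b_3$ drops — but by how much depends on whether these classes were already zero in $Z/\Gamma$. The correct statement must be that these $2m$ classes in $H_3(\Sigma_i$-neighbourhood in $\widetilde Z)$ are genuinely new relative to $Z/\Gamma$, i.e.\ the corresponding $H_3$ classes of the orbifold neighbourhood $\overline N$ die in $Z/\Gamma$ (being supported on a small set), whereas in $\widetilde Z$ they become nontrivial because $E_i$ is a genuine codimension-$2$ submanifold carrying them. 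So the honest computation is: $b_2$ gains $n$ from the new fibre/divisor classes $[E_i]$, and $b_3$ gains $2m$ from the classes (ruling)$\cdot(\text{1-cycles on }\Sigma_i)$, $i$ ranging over components with $\sum g_i = m$. The Mayer--Vietoris comparison still organises the argument, but one must resist the temptation to cancel the degree-$3$ contribution: the point is precisely that $H_3$ of the exceptional divisors injects into $H_3(\widetilde Z)$ (which again one checks via the retraction $U\to E$ and the long exact sequence of the pair, using $c_1=0$ only where needed), giving the genuine increase by $2m$. So the plan above is right in structure but the degree-$3$ term must be handled by showing the $2m$ "ruled-surface" classes are linearly independent in $H_3(\widetilde Z;\Q)$ and vanish in (the relevant localisation of) $H_3(Z/\Gamma;\Q)$ — that, rather than the $\Z_2\oplus\Z_2$ bookkeeping alone, is the real heart of the matter.
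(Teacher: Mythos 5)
Your Mayer--Vietoris framework is viable in principle, but as written the degree-$3$ part contains an error and then defers the key step without carrying it out. The error: you assert that the orbifold neighbourhood $\overline{N}$ of the singular locus has $b_3(\overline N)=2m$ ``since it retracts onto $\Sigma$''; but $\Sigma$ is a $2$-complex, so $b_3(\overline N)=0$, while the neighbourhood $\overline U$ of the exceptional locus retracts onto the ruled surfaces $E_i$ and has $b_3(\overline U)=2m$. This asymmetry is precisely the source of the jump in $b_3$, and your first pass (which concluded $\Delta b_3=0$) misses it. Your ``Wait'' paragraph correctly identifies the $2m$ classes $(\text{fibre})\times H_1(\Sigma_i)$ as the new ones, but then explicitly postpones the two things that actually need proving --- that these classes are linearly independent in $H_3(\widetilde Z;\Q)$, and that no other changes occur in degree $3$ --- calling them ``the real heart of the matter.'' A proof that names its own missing core is not yet a proof; completing it would require computing the maps $H_*(\partial N)\to H_*(E)$ and $H_*(\partial N)\to H_*(\Sigma)$ through the $\R\mathrm{P}^3$-bundle structure of $\partial N$ and chasing both long exact sequences, including the bookkeeping at the $\Z_2\oplus\Z_2$-points.

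The paper avoids all of this. For $b_2$ it shows the $[E_i]$ are independent by pairing with the $-2$-spheres from the local model $\O(-2)\times\C$, and shows $f_*$ is an isomorphism on $\langle [E_i]\rangle^\perp$ by a general-position argument carried out in a smooth finite cover $Z/\Gamma'$ (a point your sketch does not address: pushing cycles off the singular locus inside an orbifold requires passing to such a cover), together with Poincar\'e duality and the fact that $f$ has degree $1$. For $b_3$ it sidesteps the entire degree-$3$ analysis: since $b_0,b_1,b_5,b_6$ are unchanged and $b_2,b_4$ each increase by $n$, the change in $b_3$ is forced by the Euler characteristic, and $\chi(\widetilde Z)-\chi(Z/\Gamma)=\sum_i 2(1-g_i)=2(n-m)$ is immediate from the fibrewise description of the blow-up. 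If you want to salvage your approach, I would recommend importing at least this Euler-characteristic step, which converts the hard part of your $H_3$ computation into arithmetic.
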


\begin{proof} In what follows, homology groups are taken with real coefficients. We begin with~$b_2$. Let $E_1,...,E_n$ be the exceptional divisors of the blow up $f:\widetilde{Z}\to Z/\Gamma$. Poincaré duality applies to compact oriented orbifolds so to prove $b_2(\widetilde{Z})-b_2(Z/\Gamma)=n$ it suffices to show that the fundamental classes $[E_1],...,[E_n]$ are independent in $H_4(\widetilde {Z})$ and that $f_*$ restricts to an isomorphism on $\langle[E_1],...,[E_n]\rangle^{\perp}$.

The classes $[E_i]$ are independent since each contains a $2$-sphere $S_i$ such that $E_i\cdot S_i=-2$, while $E_j\cdot S_i=0$ for $j\ne i$ (as was used in the proof of Proposition \ref{c1_vanishes} that $c_1(\widetilde{Z}) = 0$). We next prove that $f_*$ is injective on $\langle [E_i] \rangle^\perp$, i.e., that if $h\in H_2(\widetilde{Z})$ is orthogonal to all $[E_i]$ and if $f_*(h)=0$ then $h=0$. To see this, represent $h$ by a cycle $\Sigma$ disjoint from the exceptional locus $\bigcup E_i$; we will show that $f(\Sigma)$ is the boundary of a $3$-chain $Y$ in $Z/\Gamma$ that has no intersection with the singular locus of $Z/\Gamma$, the inverse image of $Y$ thus bounds $\Sigma$ showing $h=0$. To find $Y$, we first lift to a manifold, in order to use general position arguments. We take a finite index torsion free subgroup $\Gamma' \to \Gamma$, then $Z/\Gamma' \to Z/\Gamma$ is a finite cover by a manifold. The preimages of the orbifold-points form a $2$-cycle $C$ in $Z/\Gamma'$. Next, choose any $3$-chain bounding $f(\Sigma)$ and consider the inverse image, a $3$-chain $Y'$ in $Z/\Gamma'$. By a general position argument we can, if necessary, find another $3$-chain in $Z/\Gamma'$, again with boundary the inverse image of $f(\Sigma)$, but which does not  intersect $C$. Projecting back to $Z/\Gamma$ gives a $3$-chain $Y''$ bounding some multiple of $f(\Sigma)$. Dividing by this multiple gives the $3$-chain $Y$ disjoint from the singular locus with boundary~$f(\Sigma)$.

Next we must check that the restriction of $f_*$ to $\langle [E_1], \ldots, [E_n] \rangle^\perp$ is surjective. To see this, note that $f$ has degree 1 so, by Poincaré duality, the map $f^* \colon H^4(Z/\Gamma) \to H^4(\widetilde{Z})$ is injective; moreover, its image lies in the annihilator of $\langle [E_1], \ldots, [E_n]\rangle$. This is the dual of what we wanted to show.

We next calculate $b_3$, for which we only need now to prove that the Euler characteristics of $\widetilde{Z}$ and $Z/\Gamma$ satisfy $\chi(\widetilde{Z}) = \chi(Z/\Gamma) + 2(n - m)$ (indeed $b_1$ and $b_5$ do not change whilst  $b_2$ and $b_4$ increase by $n$). The blow up $f: \widetilde{Z}\to Z/\Gamma$ can be represented topologically as sequence of blow ups of singular curves of $Z/\Gamma$. At each step we replace a curve of genus $g$ by a divisor that projects to the curve with $S^2$-fibers. This means that after each blow up the Euler characteristic increases by~$2(1-g)$.
\end{proof}

\subsection{\label{simply}Simply connected Calabi--Yau manifolds}

In this section we prove Theorem \ref{non-Kahler_simply-connected}, namely the existence of simply-connected symplectic Calabi--Yaus with $b_3=0$ and $b_2$ arbitrarily large.

Simple examples of hyperbolic orbifolds can be obtained by doubling right-angled hyperbolic Coxeter polytopes. There is an infinite series of such polytopes in dimension four that can be obtained from the right-angled $120$-cell by taking a chain of hyperbolic 120-cells, each joined to the last across a single face.  Doubling such a polytope produces a hyperbolic orbifold homeomorphic to $S^4$, with each vertex in the polytope giving rise to a $\Z^2_3$-orbifold point. Notice that the singularities of these orbifolds are modeled on the action (\ref{orbiaction}) and so lead to singularities in the twistor space $Z/\Gamma$ which can be resolved by the above blow-up construction. The following result is a refinement of the statement of Theorem \ref{non-Kahler_simply-connected}, where now $b_2$ is given explicitly.

\begin{theorem} Let $\H^4/\Gamma$ be an orbifold obtained by doubling of a right-angled four-dimensional Coxeter hyperbolic polytope $P$. Let $\widetilde{Z}$ be the crepant resolution of the corresponding twistor space $Z/\Gamma$. Then $\pi_1(\widetilde{Z}) = 1$ whilst $b_3(\widetilde{Z})=0$. In particular $\widetilde{Z}$ is not diffeomorphic to a K\"ahler Calabi--Yau manifold. At the same time $b_2(\widetilde{Z})=1 + V(P)+2F(P)$, where $V(P)$ is the number of vertices of $P$ and $F(P)$ is the number of 2-faces.
\end{theorem}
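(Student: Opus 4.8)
The plan is to read off the four assertions ($\pi_1=1$, $b_3=0$, non-K\"ahlerness, and the value of $b_2$) from the machinery of the previous sections together with the combinatorics of $P$. Since $P$ is right-angled and compact, the doubled orbifold $\H^4/\Gamma$ has underlying space the double of the $4$-ball $P$, so $|\H^4/\Gamma|\cong S^4$, and all its orbifold points are of the form $\Z_2^k$, $k=1,2,3$, with actions modelled on (\ref{orbiaction}); hence the crepant resolution $\widetilde Z$ of $Z/\Gamma$ exists, is Calabi--Yau by (the proof of) Proposition \ref{c1_vanishes}, and Propositions \ref{fundamental-group-prop} and \ref{b2b3} apply. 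Proposition \ref{fundamental-group-prop} (whose proof uses only that the singularities are of this type) then gives $\pi_1(\widetilde Z)\cong\pi_1(|\H^4/\Gamma|)=\pi_1(S^4)=1$.

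Next I would compute the Betti numbers of $Z/\Gamma$ itself and feed them into Proposition \ref{b2b3}, which gives $b_2(\widetilde Z)=b_2(Z/\Gamma)+n$ and $b_3(\widetilde Z)=b_3(Z/\Gamma)+2m$, where $n$ is the number of irreducible components of the singular locus of $Z/\Gamma$ and $m$ the sum of their genera. For $b_*(Z/\Gamma)$ I would run the Leray spectral sequence of the twistor projection $t\colon Z/\Gamma\to|\H^4/\Gamma|=S^4$. Every fibre of $t$ has the rational homology of $S^2$ (the generic fibre is $S^2$; over the orbifold strata one finds $S^2/\Z_2$, $S^2/\Z_2^2$, and the orbifold sphere $S^2/\Z_2^3$, all with underlying space $S^2$), and the fibrewise orientation supplied by the Eells--Salamon structure makes $R^2t_*\Q$ the constant sheaf $\underline{\Q}$. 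Thus $R^qt_*\Q$ is $\Q,0,\Q$ for $q=0,1,2$, the possible higher differentials land in vanishing groups, the sequence degenerates, and $H^*(Z/\Gamma;\Q)\cong H^*(S^4;\Q)\otimes H^*(S^2;\Q)$; in particular $b_2(Z/\Gamma)=1$ and $b_3(Z/\Gamma)=0$.

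It then remains to enumerate the irreducible components of the singular locus of $Z/\Gamma$ using the local picture of \S\ref{twistor_singularities}. Over each vertex of $P$ --- a $\Z_2^3$-point of $\H^4/\Gamma$ --- the central twistor fibre descends to a curve with underlying space $S^2$, contributing one component per vertex; over each $2$-face $F$ of $P$ one obtains the twistor lifts (point~(4) of \S\ref{coadjoint}) of the corresponding totally-geodesic suborbifold, again curves with underlying space $S^2$, and one checks from the local models at the $\Z_2^2$- and $\Z_2^3$-points how these curves meet and that no further components arise. Carrying out this bookkeeping gives $n=V(P)+2F(P)$ components, all of genus zero, so $m=0$; hence $b_2(\widetilde Z)=1+V(P)+2F(P)$ and $b_3(\widetilde Z)=0$. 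Finally, the non-K\"ahler statement is formal: if $\widetilde Z$ were diffeomorphic to a compact K\"ahler manifold $W$ with $c_1(W)=0$, then $W$ would be simply connected, and, being of complex dimension $3$, the Beauville--Bogomolov decomposition would force $W$ to be a strict Calabi--Yau threefold, which has $h^{3,0}=1$ and hence $b_3(W)=2+2h^{2,1}(W)\geq2$, contradicting $b_3(\widetilde Z)=0$.

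The hard part, where I expect nearly all of the effort to go, is the penultimate step: producing a clean and complete enumeration of the singular locus of $Z/\Gamma$ --- verifying that there are exactly $V(P)+2F(P)$ components, that every one of them is rational, and that none of them coincide --- and, in tandem, pinning down the Leray sheaf $R^2t_*\Q$ over the orbifold strata carefully enough to be certain the twistor spectral sequence really degenerates. Once these local-to-global matters are settled, the rest is a direct application of Propositions \ref{fundamental-group-prop}, \ref{b2b3} and \ref{c1_vanishes} together with standard Hodge theory.
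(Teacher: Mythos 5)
Your proposal is correct and follows the same overall strategy as the paper: reduce everything to Propositions \ref{fundamental-group-prop} and \ref{b2b3}, compute $H^*(Z/\Gamma;\Q)$, and enumerate the components of the singular locus. The one step you handle genuinely differently is the computation of the Betti numbers of the orbifold twistor space. You run the Leray spectral sequence of $t\colon Z/\Gamma\to S^4$ directly, which forces you to identify the sheaves $R^qt_*\Q$ over the orbifold strata (a point you rightly flag as delicate). The paper instead passes to a torsion-free finite-index subgroup $\Gamma'\subset\Gamma$, uses the fact that $H^*(Z/\Gamma';\Q)$ is a free module over $H^*(\H^4/\Gamma';\Q)$ generated by $1$ and $[\omega]$ (Leray--Hirsch on the honest sphere bundle $Z/\Gamma'\to\H^4/\Gamma'$), and then takes $\Gamma/\Gamma'$-invariants, using $H^*(\H^4/\Gamma';\Q)^{\Gamma/\Gamma'}=H^*(S^4;\Q)$ together with the invariance of $[\omega]$. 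The two routes give the same answer ($b_{2i}(Z/\Gamma)=1$, $b_{2i+1}(Z/\Gamma)=0$); the paper's version avoids any analysis of the Leray sheaf near the singular fibres, at the cost of invoking the existence of a torsion-free finite-index subgroup and the transfer isomorphism for rational cohomology of finite quotients. Everything else --- the count $n=V(P)+2F(P)$ of rational components of the singular locus (where the paper is in fact terser than your own outline: it simply asserts there are $V(P)$ spheres over the vertices and $2F(P)$ over the $2$-faces), the vanishing $m=0$, and the non-K\"ahler conclusion from $b_3=0$ --- matches the paper's argument.
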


\begin{proof} The proof that $\pi_1(\widetilde{Z}) =1$ is identical to that of Proposition \ref{fundamental-group-prop}. To prove the claims concerning $b_2$ and $b_3$ we first show that $b_{2i+1}(Z/\Gamma)=0$ for $i=0,1,2$ and $b_{2i}(Z/\Gamma)=1$ for $i=0,1,2,3$. To this end, take a finite index torsion free subgroup $\Gamma'$ of $\Gamma$ and consider the quotient manifold $M^4=\H^4/\Gamma'$. Then $Z/\Gamma$ is a quotient of the twistor space $Z/\Gamma'$ of $M$ by the finite group $\Gamma/\Gamma'$. The cohomology of $Z/\Gamma$ can be identified with the invariants of the action of $\Gamma/\Gamma'$ on $H^*(Z/\Gamma')$. At the same time the co-homology of $Z/\Gamma'$ is a free module over $H^*(M)$ with generator $[\omega]\in H^2(Z/\Gamma')$ (where $\omega$ is the symplectic form on $Z'$). Since the quotient of $M$ by $\Gamma/\Gamma'$ is $S^4$ while $[\omega]$ is an invariant of the action, it follows that the ring on invariants of the action of $\Gamma/\Gamma'$ is given by all powers of $[\omega]$.

Now the claims concerning $b_2(\widetilde{Z})$ and $b_3(\widetilde{Z})$ follow from Proposition \ref{b2b3}: the $\Z_2$-singularities of $Z/\Gamma$ form a collection of two-spheres embedded in $Z/\Gamma$; there are in total $V(P)$ spheres projecting to the vertexes of the doubled Coxeter polytope, and $2F(P)$ projecting to the 2-faces.
\end{proof}

\section{Concluding remarks and questions}

We close the article with some remarks on further applications of the techniques used here, together with some natural questions which the construction raises.

Firstly, the construction of the blow-up $\widetilde{Z}$ used no special properties of the orbifold $Z/\Gamma$ beyond the fact that all orbifold groups were abelian. It should adapt in a straightforward fashion to show that any such symplectic orbifold admits a symplectic resolution.

The question of crepancy is more subtle. Here we should limit ourselves to dimension 6. Indeed in algebraic geometry, there is a stark difference between complex dimension 3 and higher dimensions. On the one hand, crepant resolutions are known not always to exist in dimension 4 or higher. On the other hand, in dimension 3 any complex projective variety with quotient singularities (orbifolds) and trivial canonical bundle admits a crepant resolution, i.e.\ also with a trivial canonical bundle. (See \cite{R} and the references given there.) This motivates the following conjecture.

\begin{conjecture}\label{symplectic_crep_res_conjecture}
Every $6$-dimensional symplectic Calabi--Yau orbifold (i.e., with $c_1=0$) admits a symplectic {\it crepant} resolution.
\end{conjecture}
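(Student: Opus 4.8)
The plan is to reduce the conjecture to the local analysis of Calabi--Yau quotient singularities in complex dimension three, invoke the existence of crepant resolutions there, and then patch everything together using the semi-local Delzant machinery of \S\ref{cutting_thm}. Let $(X,\omega)$ be a $6$-dimensional symplectic Calabi--Yau orbifold. Fix a compatible almost complex structure; since $c_1(X)=0$ the orbifold canonical bundle is topologically trivial and we fix a nowhere-vanishing section of it. Near an orbifold point $p$ with local group $G_p$, the Equivariant Darboux Lemma provides a chart modelled on $\C^3/G_p$ with $G_p$ acting linearly and symplectically; averaging a Hermitian metric puts $G_p\subset\U(3)$, and triviality of the canonical bundle forces the action on $\Lambda^3\C^3$ to be trivial, i.e.\ $G_p\subset\SU(3)$. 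Hence $X$ is everywhere locally modelled on $\C^3/G$ with $G\subset\SU(3)$ a finite subgroup.

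For each such $G$ the $G$-Hilbert scheme $\mathrm{Hilb}^G(\C^3)\to\C^3/G$ is, by Bridgeland--King--Reid (after Nakamura and Ito--Nakajima), a crepant resolution: it is a smooth quasiprojective Calabi--Yau threefold, an isomorphism over the free locus, its canonical bundle is trivial, and a trivialising section can be chosen to agree over the free locus with the standard one $dz_1\wedge dz_2\wedge dz_3$. Since the modification is supported over a compact subset, one can equip $\mathrm{Hilb}^G(\C^3)$ with a K\"ahler form equal to the Euclidean one outside a compact set, with arbitrarily small exceptional volume. Now I would stratify the singular locus $\Sigma\subset X$: because no non-identity element of $\SU(3)$ fixes a complex $2$-plane, the top-dimensional strata are complex curves, along which the transverse singularity is a surface du Val singularity $\C^2/H$ with $H\subset\SU(2)$, while the remaining strata are isolated points where such curves meet and/or the local group is larger. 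I would resolve outward from the deepest strata: near a deep point $p$ use $\mathrm{Hilb}^{G_p}(\C^3)$, and along a curve $C$ of transverse-$H$ singularities use the family over $C$ of minimal resolutions of $\C^2/H$, automatically crepant in dimension four. Once compatible local crepant models are in hand, the gluing is exactly Theorem \ref{local_torus_cuts} combined with Weinstein's Theorem \ref{relative_local_neighbourhood}, just as in Lemma \ref{extending_circle_actions} and \S\ref{doing_the_blowup}: excise a neighbourhood of $\Sigma$, identify the collar symplectically with the standard-at-infinity region of each local resolution, and glue. That $c_1(\widetilde X)=0$ then follows as in Proposition \ref{c1_vanishes}, since crepancy means the local trivialising sections patch with the section over $X\setminus\Sigma$ to a global nowhere-vanishing section of $K_{\widetilde X}$; equivalently, Lemma \ref{chern1} together with the vanishing of the discrepancy of each local model kills every exceptional-divisor coefficient of $c_1(\widetilde X)$.

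I expect the genuine difficulty to be the \emph{compatibility} of the local crepant models over the stratified singular locus. In the toric situation treated in this paper it is automatic: the local models carry commuting Hamiltonians whose moment polytopes fit together, so Definition \ref{local_cutting_data} is met and Lemma \ref{compatibility} applies verbatim. For general $G\subset\SU(3)$ there is no torus organising the picture, and the restriction of $\mathrm{Hilb}^{G_p}(\C^3)$ to a punctured neighbourhood of a curve $C$ through $p$ need not \emph{a priori} coincide with the minimal resolution chosen along $C$. Two routes seem available: (i) use the explicit combinatorial description of $G$-Hilb for $G\subset\SU(3)$ (the junior-simplex / ``$G$-igsaw'' picture of Craw--Reid) to verify that the restriction is the expected one; or (ii) exploit the fact that in complex dimension three any two crepant resolutions of a quotient singularity differ by a sequence of flops (Kawamata, Bridgeland), so that when the canonical choices disagree over an overlap one can flop one model into agreement---a local operation that preserves crepancy and does not touch the region where the symplectic form is already Euclidean. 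Turning either route into an honest construction, and checking that the resulting data genuinely satisfies Definition \ref{local_cutting_data}, is where the weight of the conjecture lies.
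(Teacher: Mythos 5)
This statement is Conjecture~\ref{symplectic_crep_res_conjecture}: the paper does not prove it. The authors establish only the special case of the twistor orbifolds $Z/\Gamma$ with local groups $\Z_2^k$ acting as in (\ref{orbiaction}), and they explicitly defer even the general abelian case to future work. So there is no proof in the paper to compare yours against, and a complete argument here would be a new theorem, not a reconstruction. Judged on its own terms, your proposal is a reasonable strategy sketch, and you are candid that the compatibility of local models is unresolved; but the gaps are more serious than that paragraph suggests, and they sit exactly where the paper's techniques stop working.

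Concretely: (1) The claim that the gluing ``is exactly Theorem~\ref{local_torus_cuts}'' fails for non-abelian $G_p\subset\SU(3)$. The semi-local Delzant construction needs commuting Hamiltonians generating torus actions on the overlaps (Definition~\ref{local_cutting_data}); a non-abelian local group admits no such organising torus, so the entire cutting machinery of \S\ref{cutting_thm} is unavailable and you would need a different symplectic gluing mechanism altogether. (2) Equipping $\mathrm{Hilb}^G(\C^3)$ with a K\"ahler form that is literally Euclidean outside a compact set is asserted, not proved; the exceptional divisors carry nontrivial cohomology, so the form cannot simply agree with the pullback of the Euclidean form off a compact set without a genuine ALE-type construction, which is a real analytic input. (3) Along a curve $C$ of transverse du Val singularities you need a fibrewise minimal resolution globalised over $C$; this must contend with monodromy of the resolution graph and then be given a symplectic structure compatible with the deep-point models --- none of which is addressed, and your fallback of ``flopping one model into agreement'' is itself a birational surgery whose symplectic realisation is precisely the kind of thing the conjecture asks one to construct. (4) Even the opening reduction needs care: $c_1=0$ in the cohomology of the underlying space is weaker than triviality of the orbifold canonical bundle, which is what forces $G_p\subset\SU(3)$ and makes ``crepant'' meaningful. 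In short, you have correctly located the hard points, but the statement remains open, and your write-up should present this as a programme rather than a proof.
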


In this article we have shown of course that this conjecture holds for
all the symplectic Calabi--Yau orbifolds coming from Theorem \ref{mainhyper}. Exactly the same techniques should apply to the case of Calabi--Yau orbifolds with abelian singularities; we hope to address this in future work.

Next we turn to the question of whether or not the examples constructed can be symplectomorphic
to algebraic Calabi--Yau varieties. Certainly for the majority of them this is ruled out, because
the fundamental group of an algebraic Calabi--Yau is virtually Abelian. Moreover, since the fundamental group is a birational invariant, this also prevents them from even being birational to algebraic
Calabi--Yau varieties. In general we think that the following is very plausible:

\begin{conjecture}
No symplectic Calabi--Yau manifold arising in the proof of Theorem \ref{main} is symplectically birational to an algebraic Calabi--Yau manifold. Moreover, two symplectic Calabi--Yau manifolds coming from different hyperbolic orbifolds are not symplectically birational.
\end{conjecture}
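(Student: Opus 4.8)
The plan is to attack the two assertions together, leaning first on the one genuinely robust invariant available---the fundamental group---and then supplying finer data in the cases it does not settle.

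First I would make precise the sense in which $\pi_1$ is an invariant of symplectic birational equivalence: all the blow-ups and blow-downs one allows take place along symplectic submanifolds of real codimension at least four, and such a modification replaces a tubular neighbourhood by a projectivised normal bundle, so by Van Kampen it does not change $\pi_1$. Consequently, if the resolutions $\widetilde Z_1,\widetilde Z_2$ of the twistor spaces of $\H^4/\Gamma_1,\H^4/\Gamma_2$ are symplectically birational then $\pi_1(\H^4/\Gamma_1)\cong\pi_1(\H^4/\Gamma_2)$ by Proposition~\ref{fundamental-group-prop}; and if $\widetilde Z$ is symplectically birational to a projective Calabi--Yau $X$, then $\pi_1(\widetilde Z)$ is virtually abelian, since $X$ carries a Ricci-flat Kähler metric and the Beauville--Bogomolov decomposition makes $\pi_1(X)$ a finite extension of a lattice. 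This disposes of the first assertion whenever $G$ is not virtually abelian, and of the second whenever the two hyperbolic orbifolds have non-isomorphic topological fundamental groups.

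The substance is thus (a) the first assertion for virtually abelian $G$---in particular for the simply-connected examples of Theorem~\ref{non-Kahler_simply-connected}---and (b) the second assertion for hyperbolic orbifolds with isomorphic $\pi_1$. For both I would pass to genus-zero Gromov--Witten invariants, which are well defined here because $c_1=0$ forces semi-positivity, and which transform under symplectic blow-up and blow-down along a submanifold by the blow-up formulas of Hu and of Li--Ruan. The idea is to assemble from the full genus-zero GW system of $\widetilde Z$ a package of numbers unaltered by symplectic birational equivalence---morally the genus-zero invariants of a ``minimal model'', equivalently, via the crepant-resolution principle, the orbifold GW invariants of $Z/\Gamma$ itself---and then to compute it. The relevant curve classes are explicit: the twistor fibres, the three $(-1,-1)$-curves $C_1,C_2,C_3$ produced over each $\Z_2\oplus\Z_2$-point, and the ruling curves in the exceptional divisors lying over the genus-$g$ curves of $\Z_2$-singularities; their invariants, and the way these classes intersect, are read directly off the combinatorics of $\H^4/\Gamma$ described in \S\ref{twistor_singularities} (the number of $\Z_2^3$-vertices, each contributing an ``octahedron'' of $(-1,-1)$-curves, and the number and genera of the singular surfaces). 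For the first assertion I would then exploit that, by Theorem~\ref{b3} and by pushing up the number of $\Z_2^3$-vertices, one can force arbitrarily many disjoint $(-1,-1)$-curve classes arranged in this rigid octahedral pattern, a configuration incompatible with the far more constrained structure of flopping curves on a minimal algebraic Calabi--Yau threefold together with the Gopakumar--Vafa integrality of its GW invariants. For the second assertion, Mostow rigidity makes $\H^4/\Gamma$ a complete invariant of its topological type, and one checks that the GW package above reconstructs enough of the orbifold (vertices, singular surfaces with genera, and incidences) to separate non-isomorphic ones.

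The main obstacle lies in this final step, in two guises. One must first upgrade the known blow-up formulas into a bona fide symplectic-birational invariant of Calabi--Yau six-manifolds that is moreover \emph{sharp} enough for (b)---this may require also the blow-down formulas for higher-genus or descendent invariants, and a careful treatment of the crepant-resolution comparison with the orbifold $Z/\Gamma$. Second, the incompatibility with algebraic Calabi--Yaus rests on structural facts about rational curves on minimal Calabi--Yau threefolds (finiteness of flopping contractions, GV integrality) whose exact use here must be pinned down; absent a clean such statement, the fallback---and probably the hardest point---would be to show directly that $\widetilde Z$ admits no integrable complex structure admitting a holomorphic blow-down to a projective variety, i.e.\ that $\widetilde Z$ is not bimeromorphic to anything algebraic at all.
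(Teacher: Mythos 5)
This statement is labelled a \emph{Conjecture} in the paper, and the authors give no proof of it; the only supporting argument in the surrounding text is the observation that for most choices of $G$ the first assertion follows already from the fundamental group, since $\pi_1$ of an algebraic Calabi--Yau is virtually abelian and $\pi_1$ is a birational invariant. Your opening paragraph reproduces exactly this partial argument, and that part is sound (granting a precise definition of symplectic birational equivalence, say the birational cobordism of Hu--Li--Ruan, under which modifications occur along centres of real codimension at least four and hence do not change $\pi_1$). Up to that point you have recovered everything the paper actually establishes: the first assertion for $G$ not virtually abelian, and the second assertion when the two orbifolds have non-isomorphic topological fundamental groups.

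Beyond that, what you have written is a research programme rather than a proof, and the obstacles you candidly list at the end are genuine and, as far as I can tell, open. Concretely: (i) no package of genus-zero Gromov--Witten invariants of a symplectic Calabi--Yau $6$-manifold is currently known to be an invariant of symplectic birational equivalence --- the blow-up formulas compare invariants across a single blow-up with a known centre, and do not by themselves yield a quantity unchanged under an arbitrary chain of blow-ups and blow-downs with unknown centres; (ii) the crepant resolution comparison between the invariants of $\widetilde{Z}$ and the orbifold invariants of $Z/\Gamma$ is itself conjectural at this level of generality; (iii) the asserted incompatibility of the octahedral configurations of $(-1,-1)$-curves with minimal algebraic Calabi--Yau threefolds is not an argument --- there is no structural theorem about flopping curves or Gopakumar--Vafa integrality that you cite, or could cite, which excludes such configurations; and (iv) for the second assertion you would need to show that your GW package actually reconstructs the hyperbolic orbifold, which you do not attempt. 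None of these steps is provably wrong, but each is a missing idea of comparable difficulty to the conjecture itself; the net effect is that your proposal proves the special cases the authors already note and otherwise restates the conjecture in Gromov--Witten language.
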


We would like to finish with some speculation concerning a different type of symplectic 6-manifold, namely those which are Fano, in the sense that $[\omega] = c_1$ in $H^2(\Z)$ (where $[\omega]$ denotes the sypmlectic class). Thanks to Seiberg--Witten theory and the work of Taubes (see, e.g., \cite{taubes}) we know that in dimension 4 there exist only $10$ symplectic Fano manifolds up to symplectomorphism, essentially just the algebraic Fano surfaces (i.e. rational surfaces with ample anti-canonical bundle) with anti-canonical polarization. Moreover, in algebraic geometry, it is known that the number of deformation families of Fano manifolds of given dimension is finite. With this in mind we ask the following question:

\begin{question}\label{symconj}
Is the number of symplectic Fano $6$-manifolds finite up to symplectomorphism?
\end{question}

Note that in dimension $12$ and higher there exists a construction \cite{FP,Re} of symplectic Fanos of arbitrary topological complexity. So if the answer to Question \ref{symconj} is positive this would be a reflection of a certain deep low-dimensional phenomena. Note that a positive answer to this question in full generality would require a genuinely new idea or technique since currently no non-trivial topological restrictions are known for symplectic manifolds of dimension higher than~$4$. At the same time we are unaware of any potential construction that could produce a negative answer. We conjecture that the answer to Question \ref{symconj} is yes when the symplectic $6$-dimensional Fano admits a Hamiltonian $S^1$-action (such manifolds with $b_2=1$ are classified in \cite{mcfano}) or, more generally, when it is assumed to be symplectically uniruled. As a final remark, it would also be interesting to find out the minimal dimension in which there exist symplectic Fanos $M^{2n}$, with \emph{unbounded} volume $c_1(TM)^{n}$. For the moment we know that $4<2n\le 12$.

%\section{Abelian stabilizers}

%\begin{theorem} Any six-dimensional symplectic Calabi Yau orbifold $W$ with
%abelian stabilizers admits a crepant resolution.
%\end{theorem}

%In order to prove the theorem we will need one definition and one lemma.

%\begin{definition} A rational cone in $(\R^n, \Z^n)$ is called {\it Gorenstien} if?
%A rational polytope in $(\R^n, \Z^n)$ is called {simple Gorenstein} if all its vertices
%correspond to Grorenstein simplicial cones.
%\end{definition}

%\begin{lemma} Let $P$ be a tree-dimensional (not necessarily compact) Gorenstien
%polytope. Then it adimits a crepant resolution.
%\end{lemma}

%\begin{proof} The set of orbifold points of $W$ consists of a collection of
%irreducible surface $\Sigma_1,...,\Sigma_n$ that intersect at points $p_1,...,p_m$
%and additionally there might be some isolated orbi-points. By equivariant Darboux lemma
%at isolated points there exist local holomorphic coor
%\end{proof}

\small


\begin{thebibliography}{52}
\bibitem{Ah} {\bf A.~Akhmedov.} Simply connected symplectic Calabi--Yau manifolds. Preprint, arXiv:1107.2623v1, (2011).

%\bibitem{toledo-book} {\bf J.~Amor\'{o}s, M.~Burger, K.~Corlette, D.~Kotschick, D.~Toledo.} Fundamental Groups of Compact Kähler Manifolds. {\it Mathematical Surveys and Monographs} American Math. Soc. 44 (1996).

\bibitem{BK} {\bf S.~Baldridge, P.~Kirk.} Coisotropic Luttinger surgery and some new symplectic 6-manifolds. Preprint, arXiv:1105.3519v1, (2011).

\bibitem{B} {\bf S.~Bauer.} Almost complex 4-manifolds with vanishing first Chern class. {\it J. Differential Geom.} 79 (1), 25--32, (2008).

\bibitem{delzant} {\bf T.~Delzant.} Hamiltoniens périodiques et images convexes de l’application moment, {\it Bull. Soc. math. France}, 116, 315--339, (1988).

\bibitem{eells-salamon} {\bf J.~Eells, S.~Salamon.} Twistorial construction of harmonic maps of surfaces into four-manifolds, {\it Ann. Scuola Norm. Sup. Pisa Cl. Sci.} 12 (4), 589--640, (1986).

\bibitem{FP0}{\bf J.~Fine, D.~Panov.} Symplectic Calabi--Yau manifolds, minimal surfaces and the hyperbolic geometry of the conifold. {\it J. Differential Geom.}, 82 (1), 155--205,  (2009).


\bibitem{FP} {\bf J.~Fine, D.~Panov.}  Hyperbolic geometry and non-Kahler manifolds with trivial canonical bundle. {\it Geometry and Topology} 14, 1723--1763, (2010).

\bibitem{gromov}{\bf M.~Gromov.}
Manifolds: Where Do We Come From? What Are We? Where Are We Going.
Preprint 2010.

%\bibitem{gromov} {\bf M.~Gromov.} Partial Differential Relations. {\it Ergebnisse der Mathematik und ihrer Grenzgebiete} (3) 9 Springer, Berlin, (1986).

\bibitem{GS} {\bf V.~Guillemin, S.~Sternberg.} Symplectic Techniques in Physics, second edition. Cambridge University Press, Cambridge, (1990).

\bibitem{lerman} {\bf E.~Lerman.} Symplectic Cuts. {\it Math. Res. Lett.}, 2, 247--258, (1995).

\bibitem{lerman-woodward} {\bf E.~Lerman, E.~Meinrenken, S.~Tolman, C.~Woodward.} Non-abelian convexity by symplectic cuts. {\it Topology}, 37 (2), 245--259, (1998).

\bibitem{Li} {\bf T.~J.~Li.} Quaternionic vector bundles and Betti numbers of symplectic 4-manifolds with Kodaira dimension zero. {\it Internat. Math. Res. Notices}, 1--28,  (2006).

%\bibitem{mcduff} {\bf D.~McDuff.} Examples of simply-connected symplectic non-Kählerian manifolds. {\it J. Differential Geom.}, 20 (1),  267--277, (1984).

\bibitem{mcfano}{\bf D .~Mcduff.} Some 6-dimensional Hamiltonian $S^1$-manifolds, {\it J. Topol. 2} no. 3, 589--623, (2009).

%\bibitem{mcduff-salamon} {\bf D.~McDuff, D.~Salamon.} Introduction to Symplectic Topology. {\it Oxford University Press.} Second edition. (1998).

\bibitem{PP} {\bf D.~Panov, A.~Petrunin.} Telescopic actions. Preprint, arXiv:1104.4814, (2011).

\bibitem{Re} {\bf A.~G.~Reznikov.} Symplectic twistor spaces.
{\it Ann. Global Anal. Geom.}, 11(2), 109--118, (1993).

\bibitem{R} {\bf S.-S.~Roan.} Minimal resolutions of Gorenstein orbifolds in dimension 3. {\it Topology} 35, 489--508,  (1996).

\bibitem{smale} {\bf S.~Smale.} A Vietoris mapping theorem for homotopy, {\it Proc. Amer. Math. Soc.} 8, 604--610, (1957).

\bibitem{STY} {\bf I.~Smith, R.~P.~Thomas, S.-T.~Yau.} Symplectic conifold transitions. {\it J. Differential Geom.}, 62 (2), 209--242, (2002).

\bibitem{taubes} {\bf C.~Taubes.} Seiberg--Witten and Gromov invariants of symplectic 4-manifolds. {\it Internat. Press of Boston} (2010).

\bibitem{TY}{\bf R. Torres, J. Yazinski.} Geography of symplectic 4-and 6-manifolds.  Preprint, arXiv:1107.3954 (2011).

\bibitem{weinstein} {\bf A.~Weinstein.} Symplectic manifolds and their Lagrangian submanifolds. {\it Advances in Mathematics}, 6, 329--346, (1971).
\end{thebibliography}
\end{document}